  \tikzstyle{dot}=[circle, inner sep=.4mm, draw=black, outer sep=2mm]
  \tikzstyle{gdot}=[circle, inner sep=.4mm, draw=black!25, outer sep=2mm]
  \tikzstyle{odot}=[circle, inner sep=.4mm, draw=black, outer sep=2mm]
\theoremstyle{definition}
\newtheorem{theorem}{Theorem}[section]
\newtheorem{proposition}{Proposition}
\newtheorem{definition}[theorem]{Definition}
\newtheorem{lemma}[theorem]{Lemma}
\numberwithin{equation}{section}
\numberwithin{figure}{section}
\definecolor{lightgray}{rgb}{0.8, 0.8, 0.8}
\definecolor{darkgray}{rgb}{0.7, 0.7, 0.7}
\definecolor{darkblue}{rgb}{0, 0, .4}
\newcommand{\Sep}{{\mathcal{S}}}
\title{\sc Equipopularity Classes in the \\ Separable Permutations}
\author{%
Michael Albert, Cheyne Homberger, and Jay Pantone
}
\date{}
\begin{document}

\maketitle

\begin{abstract}
  When two patterns occur equally often in a set of permutations, we say that
  these patterns are equipopular. Using both structural and analytic tools, we
  classify the equipopular patterns in the set of separable permutations. In
  particular, we show that the number of equipopularity classes for length $n$ patterns
  in the separable permutations is equal to the number of partitions of $n-1$. 
\end{abstract}

\section{Introduction}

  Two sequences $\alpha_1, \alpha_2, \dots, \alpha_n$ and $\beta_1, \beta_2, \dots ,\beta_n$ are said to be \emph{order isomorphic} if they share the same relative order, i.e., $\alpha_r < \alpha_s$ if and only if $\beta_r < \beta_s$.  The permutation $\pi$ is said to \emph{contain} the permutation $\sigma$ of length $k$ as a pattern (denoted $\sigma \prec \pi$) if there is some increasing subsequence $i_1, i_2, \dots, i_k$ such that the sequence $\pi(i_1), \pi(i_2), \dots, \pi(i_k)$ is order isomorphic to $\sigma(1), \sigma(2), \dots, \sigma(k)$.  If $\pi$ does not contain $\sigma$, we say that $\pi$ \emph{avoids} $\sigma$.  For example, the permutation $\pi = 24153$ contains the pattern $\sigma = 312$, because the second, third, and fifth entries (4, 1, and  3) share the same relative order as the entries of $\sigma$.  The set of all permutations equipped with this containment order forms a partially ordered set. 

For permutations $\sigma$ and $\tau$ of lengths $n$ and $m$ respectively, the \emph{direct sum} ($\sigma \oplus \tau$) and \emph{skew sum} ($\sigma \ominus \tau$) are defined as follows : 

\[
  (\sigma \oplus \tau)(i) = 
      \begin{cases}
      \sigma(i) & 1 \leq i \leq n  \\
      \tau(i-n) + n &  n < i \leq n+m 
      \end{cases}  \\ \quad , \quad
  (\sigma \ominus \tau)(i) = 
    \begin{cases}
      \sigma(i) + m & 1 \leq i \leq n \\
      \tau(i-n)  & n < i \leq n+m
      \end{cases} 
      .
\]

These operations are more naturally understood graphically: the graph of $\sigma \oplus \tau$  (resp.,~$\sigma \ominus \tau$) is obtained by stacking the graph of $\tau$ above (resp.,~below) and to the right of that of $\sigma$. Both operations are individually, but not jointly, associative and are noncommutative. 

This paper is concerned with \emph{separable permutations} which are defined as the class of all permutations that can be formed from the length one permutation, $1$, by iterated applications of direct and skew sums.
For instance, $\pi = 543612$ is separable, as: 
\[
\pi = 543612 = ((1 \ominus 1 \ominus 1) \oplus 1)\ominus (1 \oplus 1).
\]


A \emph{permutation class} is a set of permutations that forms a downset in the pattern ordering, i.e., a set $\mathcal{C}$ for which $\pi \in \mathcal{C}$ and $\sigma \prec \pi$ implies $\sigma \in \mathcal{C}$.  The class of all permutations is denoted $\mathfrak{S}$. Enumerating and describing permutation classes has led to a variety of productive research and deep results relying on the combination of analytic, algebraic, and structural methods. Since classes are closed downward, if $\beta \not \in \mathcal{C}$ then every permutation in $\mathcal{C}$ must avoid $\beta$. In fact, any class can be defined as the set of permutations avoiding all the minimal (with respect to $\prec$) elements of its complement. These minimal elements are called the \emph{basis} of the class.
  
Since every pattern occurring in a separable permutation is itself separable, the set of all separable permutations forms a permutation class; its basis is easily shown to be $\{2413,3142\}$.  Denote the set of separable permutations by $\mathcal{S}$ and for any class, $\mathcal{C}$, denote by $\mathcal{C}_n$ the set of permutations of length $n$ within the class. 

A \emph{statistic} on a set of permutations is just a function from the set to $\mathbb{N}$. This paper is concerned primarily with the statistic of pattern \emph{occurrences}, that is, the number of times a pattern occurs in a permutation. We formalize this notion below. 

\begin{definition}
Let $\sigma$ be a (permutation) pattern, and let $\pi$ be any permutation.  Define the statistic $\nu_{\sigma}$ on $\mathfrak{S}$ by defining $\nu_\sigma(\pi)$ to be the number of times the pattern $\sigma$ occurs in the permutation $\pi$. 
\end{definition}

For example, the number of inversions of $\pi$ is just $\nu_{21}(\pi)$.

When considering a statistic it is a common first step to ask about its mean value, or equivalently, its sum. To that end, for a set $\mathcal{X}$ of permutations we define $\nu_\sigma(\mathcal{X})$ to be the total number of occurrences of $\sigma$ in the set $\mathcal{X}$, that is: 
\[
\nu_{\sigma}(\mathcal{X}) = \sum_{\pi \in \mathcal{X}} \nu_{\sigma}(\pi).
\]

In the set of all permutations, the total number of occurrences of a specified pattern depends only on the length of the pattern. To see this, let $\sigma$ and $\tau$ be two permutations of length $k$ and observe that for $n \geq k$ and any subset $X \subseteq \{1,2,\dots, n\}$ of cardinality $k$ we can define a bijection between the set of permutations whose pattern at the indices $X$ is $\sigma$ and those whose pattern is $\tau$ (namely, leave the other elements fixed and rearrange the pattern of the elements on indices $X$, retaining the set of values). So, if $\sigma$ has length $k$ and we choose $\pi$ uniformly at random in $\mathfrak{S}_n$, then the probability that the pattern of $\pi$ in any particular set of $k$ indices is equal to $\sigma$ is just $1/k!$. Linearity of expectation then implies that 
\[
\nu_{\sigma}(\mathfrak{S}_n) = \frac{n!}{k!} \binom{n}{k}.
\]

When we restrict to proper permutation classes the situation becomes less trivial, as the bijections used above are not generally available. Given a permutation class $\mathcal{C}$, we say that two patterns $\sigma$ and $\tau$ are \emph{equipopular} in $\mathcal{C}$ (denoted $\sigma \sim_{\mathcal{C}} \tau$, or just $\sigma \sim \tau$ when $\mathcal{C}$ is clear from context) if the class contains the same number of occurrences of $\sigma$ and $\tau$ at each length. That is: 
\[
\sigma \sim_{\mathcal{C}} \tau \quad \text{if and only if} \quad
\nu_\sigma(\mathcal{C}_n) = \nu_\tau(\mathcal{C}_n) \ \text{for all} \
    n \geq 1.
\]
Obviously, $\sim_{\mathcal{C}}$ is an equivalence relation. All permutations not belonging to $\mathcal{C}$ are equipopular (since the number of occurrences of any such pattern in permutations of $\mathcal{C}$ is 0) and thus we generally restrict its domain to $\mathcal{C}$. We refer to the equivalence classes of $\mathcal{C}$ as \emph{equipopularity classes}. Note that all permutations belonging to an equipopularity class are of the same length, since the least $n$ for which $\nu_{\sigma} (\mathcal{C}_n) > 0$ is $n=|\sigma|$.
  
Motivated by a question of Cooper, B\'ona~\cite{bona:absence, bona:surprising} showed that, in the class of permutations avoiding the pattern $132$, the patterns $213, 231$, and $312$ are equipopular.  Homberger~\cite{homberger:expected} extended this symmetry to show that the number of occurrences of the pattern $231$ is identical among the permutations avoiding $123$ and those avoiding $132$. Rudolph~\cite{rudolph:popularity} presented a surjection from integer partitions to equipopularity classes, and Chua and Sankar~\cite{chua:popularity} showed that this map is actually a bijection. The class of $132$ avoiding permutations is a proper subset of the separable permutations studied here. 

  This paper enumerates and classifies the equipopularity classes of the separable permutations. In Section~\ref{sec:equi} we use the structural decomposition of the separables to build a surjection from integer partitions classes onto equipopularity classes, generalizing the results of Rudolph. In Section~\ref{sec:nonequi} we show that this surjection is a bijection, generalizing the results of Chua and Sankar and proving that there are exactly partition-many equipopularity classes in the separable permutations. 




\section{The Separable Permutations}
  
The separable permutations exhibit numerous symmetries, and their recursive structure will be a fundamental tool used in this investigation.  Since every permutation $\pi \in \mathcal{S}$ can be written as a sequence of direct and skew sums of the permutation $1$, it follows that $\pi$ has a (not necessarily unique) decomposition into smaller separable permutations $\sigma$ and $\tau$ in one of two different ways:
\[
\pi = \sigma \oplus \tau \quad \text{or} \quad \pi = \sigma \ominus \tau.
\]

\begin{definition}
A permutation is \emph{sum decomposable} if it can be written as a direct sum of two non-trivial permutations, and \emph{sum indecomposable} otherwise. Skew (in)decomposability is defined analogously. 
\end{definition}

Since it is clearly impossible for a permutation to be both sum and skew decomposable, it follows then that every sum decomposable separable permutation can be written uniquely as a direct sum of its sum indecomposable parts. That is, for every sum decomposable permutation $\pi$, there exists a unique integer $k \geq 2$ and a unique sequence of sum indecomposable permutations $\sigma_i$, for $1 \leq i \leq k$ such that 
\[
\pi = \sigma_1 \oplus \sigma_2 \oplus \dots \oplus \sigma_k.
\] 
A similar decomposition holds for skew decomposable permutations. 

The only separable permutation that is both sum and skew indecomposable is $1$, so given a separable permutation $\pi$ which is, say, sum decomposable, its sum indecomposable summands, if not $1$, must be skew sums of skew indecomposable permutations. This provides a recursive decomposition of $\pi$, and this decomposition can be thought of as being represented by a particular sort of tree.

\begin{definition}
A decomposition tree is a rooted plane tree in which each non-leaf node is labelled with either $\oplus$ or $\ominus$, and has at least two children.  The non-leaf children of a node labelled $\oplus$ are labelled with $\ominus$, and vice versa. 
\end{definition}

There is then a bijection $\Gamma$ between $\mathcal{S}$ and the set of decomposition trees defined as follows:
\begin{itemize}
\item
$\Gamma(1)$ is the tree consisting of a single node;
\item
if $\pi \in \Sep$ is sum-decomposable, and $\pi = \sigma_1 \oplus \sigma_2 \oplus \dots \oplus \sigma_k$ where each $\sigma_i$ is sum indecomposable, then $\Gamma(\pi)$ has a root node labelled $\oplus$, having $k$ children and the subtrees rooted at its children are $\Gamma(\sigma_1), \Gamma(\sigma_2), \dots, \Gamma(\sigma_k)$ in that order;
\item
if $\pi$ is skew decomposable, $\Gamma(\pi)$ is defined similarly to the preceding case.
\end{itemize}

For example, the tree $\Gamma(\pi)$ for $\pi = 215643798$ is shown in  Figure~\ref{fig:decomp-tree}, and the full decomposition of $\pi$ is as follows:

 \[
 \begin{aligned} 
    215643798 &= (21) \oplus (3421) \oplus 1 \oplus (21) \\
            &= \big( 1 \ominus 1 \big) 
              \oplus \big(12 \ominus 1 \ominus 1\big) 
              \oplus 1 \oplus \big( 1 \ominus 1\big) \\
            &= \big( 1 \ominus 1 \big) 
              \oplus \big((1 \oplus 1) \ominus 1 \ominus 1\big) 
              \oplus 1 \oplus \big( 1 \ominus 1\big).
\end{aligned}
\]

  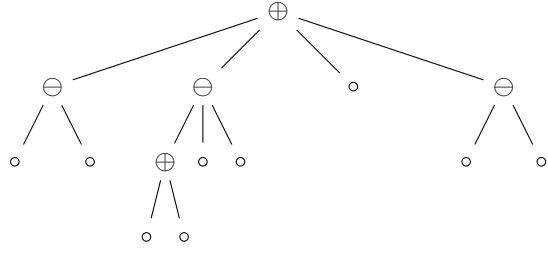
\begin{figure}[t]
    \centering
    \begin{tikzpicture}
      \node[] (a1) at (6,5) {$\oplus$};

      \node[] (b1) at (3,4) {$\ominus$};
      \node[] (b2) at (5,4) {$\ominus$};
      \node[dot] (b3) at (7,4) {};
      \node[] (b4) at (9,4) {$\ominus$};

      \node[dot] (c1) at (2.5,3) {};
      \node[dot] (c2) at (3.5,3) {};

      \node (c3) at (4.5,3) {$\oplus$};
      \node[dot] (c4) at (5,3) {};
      \node[dot] (c5) at (5.5,3) {};

      \node[dot] (c6) at (8.5,3) {};
      \node[dot] (c7) at (9.5,3) {};

      \node[dot] (d4) at (4.25,2) {};
      \node[dot] (d5) at (4.75,2) {};

      \draw (a1) -- (b1);
      \draw (a1) -- (b2);
      \draw (a1) -- (b3);
      \draw (a1) -- (b4);

      \draw (b1) -- (c1);
      \draw (b1) -- (c2);

      \draw (b2) -- (c3);
      \draw (b2) -- (c4);
      \draw (b2) -- (c5);

      \draw (b4) -- (c6);
      \draw (b4) -- (c7);

      \draw (c3) -- (d4);
      \draw (c3) -- (d5);
    \end{tikzpicture}
  \caption{The decomposition tree $\Gamma(\pi)$ of $\pi = 215643798$.}
  \label{fig:decomp-tree}
  \end{figure}

These tree representations will be used in Section~\ref{sec:equi} to investigate and describe the relationships between equipopular patterns. This recursive structure will be used to develop functional relationships between the generating functions for pattern popularity, especially in Section~\ref{sec:nonequi}. As an illustrative example, we rederive the enumeration of the separable permutations. 

\begin{theorem}[\cite{shapiro:bootstrap-perco, west:generating-tree:95}]
The number of separable permutations of length $n$ is equal to the $n^{\text{th}}$ (large) Schr\"oder number. 
\end{theorem}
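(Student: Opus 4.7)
The plan is to translate the recursive decomposition of separable permutations, as encoded by their decomposition trees, into a functional equation for the ordinary generating function $S(x) = \sum_{n \geq 1} |\Sep_n|\, x^n$, and then to match the resulting closed form against the known generating function for the large Schr\"oder numbers.

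First I would stratify $\Sep$ by the label at the root of the decomposition tree. Let $A(x)$ denote the generating function of sum-indecomposable separables (trees whose root is a leaf or is labelled $\ominus$) and $B(x)$ that of skew-indecomposables. The complement map $\pi\mapsto\pi^{c}$ preserves length and toggles every internal label of the decomposition tree, so it restricts to a length-preserving bijection between sum-indecomposable and skew-indecomposable separables. Hence $A=B$; call this common series $T(x)$.

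Next, the uniqueness of the decomposition of a sum-decomposable separable as $\sigma_1\oplus\cdots\oplus\sigma_k$ with $k\geq 2$ and each $\sigma_i$ sum-indecomposable, together with the mutually exclusive trichotomy
\[
\Sep \;=\; \{1\} \;\sqcup\; \{\text{sum-decomposable}\} \;\sqcup\; \{\text{skew-decomposable}\},
\]
yields the two relations
\[
S(x) \;=\; T(x) \;+\; \frac{T(x)^{2}}{1-T(x)}, \qquad T(x) \;=\; x \;+\; \frac{T(x)^{2}}{1-T(x)},
\]
the second summand in each equation counting sequences of length $\geq 2$ of sum- (respectively skew-) indecomposable parts. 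The step that wants the most care here is recognising that the ``non-trivial'' summand is the same in both equations: a sum-indecomposable separable of length $\geq 2$ is necessarily skew-decomposable, and skew-decomposable permutations are enumerated by $T^{2}/(1-T)$ by the $\oplus/\ominus$ symmetry established above.

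Subtracting the two equations gives $S(x)=2T(x)-x$, and the equation for $T$ rearranges to the quadratic $2T^{2}-(1+x)T+x=0$. Taking the branch with $T(0)=0$ produces
\[
T(x) \;=\; \frac{1+x-\sqrt{1-6x+x^{2}}}{4}, \qquad S(x) \;=\; \frac{1-x-\sqrt{1-6x+x^{2}}}{2}.
\]
Comparing with the standard generating function $\sum_{n\geq 0}r_{n}x^{n}=(1-x-\sqrt{1-6x+x^{2}})/(2x)$ of the large Schr\"oder numbers shows $|\Sep_n|=r_{n-1}$, which is the claimed enumeration up to the usual choice of indexing. No real obstacle arises beyond being careful about the disjointness of ``sum-decomposable'' and ``skew-decomposable'' when setting up the two functional equations.
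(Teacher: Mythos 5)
Your proof is correct and follows essentially the same route as the paper: both derive an algebraic equation for the generating function from the unique decomposition into sum-/skew-indecomposable blocks together with the symmetry exchanging $\oplus$ and $\ominus$ (you use complement and a sequence-of-blocks construction $T^2/(1-T)$; the paper uses reverse and a first-block-times-rest factorization), and then solve to obtain the Schr\"oder generating function. The only differences are these cosmetic choices and the indexing convention, which you handle correctly.
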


\begin{proof}
Let $s_n$ be the number of separable permutations of length $n$ (with $s_0 = 1$), and let $$ S = \sum_{n \geq 0} s_n t^n. $$ Let $S^\oplus$ and $S^\ominus$ be the generating functions for the sum and skew decomposable separable permutations, respectively. Since every separable permutation of length at least two is either sum or skew decomposable, and since no permutation can be both, we have that 

\begin{equation} \label{eqn:sep1}
      S = S^\oplus + S^\ominus + 1 + t.
\end{equation}

Every sum decomposable separable permutation $\pi$ can be written as $\sigma \oplus \tau$, where $\tau$ is separable and $\sigma$ is sum indecomposable (and hence either skew decomposable or has length one).  This leads to the following relationship:

    \begin{equation} \label{eqn:sep2}
      S^\oplus = (S^\ominus + t) (S-1). 
    \end{equation}

    Finally, the reverse of a sum decomposable permutation is skew decomposable and vice versa, which implies:

    \begin{equation} \label{eqn:sep3}
      S^\oplus = S^\ominus.
    \end{equation}

    Combining equations~\ref{eqn:sep1}-\ref{eqn:sep3} leads to a functional equation that can then be solved algebraically to obtain the generating function for the Schr\"oder numbers:

    $$ \begin{aligned}
      S &= \frac{ 3 - t - \sqrt{1 - 6t - t^2}}{2}. 
      \end{aligned} $$
  \end{proof}

\section{Equipopular Patterns}
\label{sec:equi}

In this section we show that the number of equipopularity classes for patterns of length $n$ is at most the number of partitions of the integer $n$. By the the decomposition of separable patterns, we determine sufficient conditions for two patterns to be equipopular (these conditions will be shown to be necessary in the next section). Our primary tool will be the decomposition trees introduced in the previous section. That is, we think of separable permutations and their decomposition trees interchangeably.

Recall that two patterns of a different length can never be equipopular, and that we are only interested in equipopularity for separable patterns (since no other patterns occur at all). Therefore, within the set of all decomposition trees of a fixed size, our goal is to classify those which correspond to equipopular patterns. This classification is obtained by describing structural transformations on trees which lead to equipopular patterns. 

\subsection{Preserving Equipopularity}
  
For a specified pattern $\sigma$, define a \emph{$\sigma$-marked permutation} to be a permutation $\pi$ for which a subset of entries forming a $\sigma$ pattern have been marked. Note that, for a set $\mathcal{X}$ of permutations, the total number of occurrences of a pattern $\sigma$ in $\mathcal{X}$, i.e., $\nu_{\sigma}(\mathcal{X})$, is equal to the number of distinct $\sigma$-marked permutations $\pi'$, where the unmarked permutation $\pi$ lies in $\mathcal{X}$.

\begin{definition}
For a permutation $\pi \in \mathfrak{S}_n$, the \emph{reverse, complement, and inverse} of $\pi$, denoted $\pi^{r}, \pi^{c}$, and $\pi^{-1}$ respectively, are defined as follows:
\[
\begin{aligned}
      (\pi^{r})(i) &= \pi(n-i + 1), \\
      (\pi^{c})(i) &= n - \pi(n) + 1, \text{ and} \\
      (\pi^{-1})(\pi(i)) &= i.
\end{aligned}
\]
\end{definition}

These three operations respect pattern containment, in the sense that, for each $i \in \{r,c,-1\}$, we have 

\begin{equation} \label{eqn:symmetry}
\sigma \prec \pi \quad \text{if and only if} \quad \sigma^i \prec \pi^i. 
\end{equation}
  
These three symmetries generate a group of automorphisms of $(\mathfrak{S}, \prec)$ isomorphic to the dihedral group of order eight, and Smith~\cite{smith:permutation-rec:} showed that this is the full group of automorphisms of the pattern poset. The \emph{symmetries} of a permutation $\pi$ are the images of $\pi$ under the action of this group. 

Note that if $\pi$ is separable, then the symmetries of $\pi$ are also separable. Further, if we apply one of these symmetries $f$ to a $\sigma$-marked permutation, we obtain an $f(\sigma)$-marked permutation. It follows then that if two patterns are related by a symmetry, then they are equipopular.  Considering separable permutations as their decomposition trees, each symmetry corresponds to a tree transformation: complementation equates to inverting the signs at each internal node, reversal to inverting the signs and reversing the order of the children of every node, and inversion reverses the order of the children of each $\ominus$-node. 

Pattern containment can be reconsidered entirely in terms of trees, though the analogous relationship is a bit more technical and requires an additional pair of definitions. 

\begin{definition}
For a decomposition tree $T$ and a set $L$ of leaves, the \emph{least common parent} of $L$ is the unique node which is furthest from the root and lies above each leaf in $L$. The \emph{skeleton} of $L$ is the tree whose vertices are the least common parents of each subset of $L$, and the ordering is induced from $T$ by transitivity.  The \emph{reduced skeleton} of $L$ is obtained by contracting any two same-sign adjacent nodes of the skeleton into a single node (with the same sign). 
\end{definition}

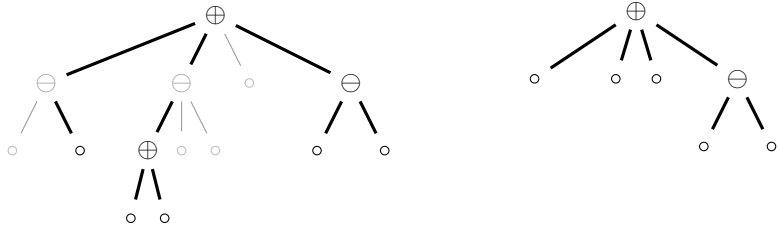
\begin{figure}
    \centering
    \begin{tikzpicture}[scale=.9]
      \node[] (a1) at (6,5) {$\oplus$};

      \node[] (b1) at (3.5,4) {\color{black!40}{$\ominus$}};
      \node[] (b2) at (5.5,4) {\color{black!40}{$\ominus$}};
      \node[gdot] (b3) at (6.5,4) {};
      \node[] (b4) at (8,4) {$\ominus$};

      \node[gdot] (c1) at (3,3) {};
      \node[dot] (c2) at (4,3) {};

      \node (c3) at (5,3) {$\oplus$};
      \node[gdot] (c4) at (5.5,3) {};
      \node[gdot] (c5) at (6,3) {};

      \node[dot] (c6) at (7.5,3) {};
      \node[dot] (c7) at (8.5,3) {};

      \node[dot] (d4) at (4.75,2) {};
      \node[dot] (d5) at (5.25,2) {};

      \draw[draw=black,very thick] (a1) -- (b1);
      \draw[draw=black, very thick] (a1) -- (b2);
      \draw[draw=black!40] (a1) -- (b3);
      \draw[draw=black, very thick] (a1) -- (b4);

      \draw[draw=black!40] (b1) -- (c1);
      \draw[draw=black, very thick] (b1) -- (c2);

      \draw[draw=black, very thick] (b2) -- (c3);
      \draw[draw=black!40] (b2) -- (c4);
      \draw[draw=black!40] (b2) -- (c5);

      \draw[draw=black, very thick] (b4) -- (c6);
      \draw[draw=black, very thick] (b4) -- (c7);

      \draw[draw=black, very thick] (c3) -- (d4);
      \draw[draw=black, very thick] (c3) -- (d5);
    \end{tikzpicture}
    \hspace{4pc}
    \begin{tikzpicture}[scale=.9]
      \node[] (a1) at (6,5) {$\oplus$};
      \node[dot] (b1) at (4.5,4) {};
      \node[dot] (b2) at (5.7,4) {};
      \node[dot] (b3) at (6.3,4) {};
      \node[] (b4) at (7.5,4) {$\ominus$};
      \node[dot] (c1) at (7,3) {};
      \node[dot] (c2) at (8,3) {};
      \node at (6,2) {};

      \draw[very thick] (a1) -- (b1);
      \draw[very thick] (a1) -- (b2);
      \draw[very thick] (a1) -- (b3);
      \draw[very thick] (a1) -- (b4);
      \draw[very thick] (b4) -- (c1);
      \draw[very thick] (b4) -- (c2);

\end{tikzpicture}
\caption{The skeleton and reduced skeleton of an occurrence of the pattern $12354$ in the permutation $215643798$.}
\label{fig:skeleton}
\end{figure}

An example of the skeleton and reduced skeleton of a set of leaves is shown in Figure~\ref{fig:skeleton}. 

\begin{lemma}
Let $\sigma, \pi \in \mathcal{S}$. Then $\sigma \prec \pi$ if and only if there is some subset $L$ of leaves of $\Gamma(\pi)$ for which the reduced skeleton of $L$ is equal to $\Gamma(\sigma)$. Further, the position of the leaves of $L$ within $\Gamma(\pi)$ correspond to the position of the occurrence of $\sigma$ within $\pi$. 
\end{lemma}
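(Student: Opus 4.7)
My approach is to prove the following strengthening by induction on $|\pi|$: for every subset $L$ of the leaves of $\Gamma(\pi)$, the pattern induced by the positions of $L$ in $\pi$ is the (unique) separable permutation whose decomposition tree equals the reduced skeleton of $L$. The lemma then follows immediately: in the forward direction, any occurrence of $\sigma$ in $\pi$ corresponds to a set $L$ of leaves whose reduced skeleton must be $\Gamma(\sigma)$; in the reverse direction, if some $L$ has reduced skeleton $\Gamma(\sigma)$, the positions of those leaves in $\pi$ realize an occurrence of $\sigma$. The positional claim is automatic from the natural bijection between entries of $\pi$ and leaves of $\Gamma(\pi)$ built into the definition of $\Gamma$.

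The base case $|\pi| = 1$ is trivial. For the inductive step, I would assume without loss of generality that $\pi$ is sum-decomposable (the skew case is symmetric via the complement symmetry) and write $\pi = \pi_1 \oplus \cdots \oplus \pi_k$ with each $\pi_i$ sum-indecomposable, so that $\Gamma(\pi)$ has a root labelled $\oplus$ with children $\Gamma(\pi_1),\ldots,\Gamma(\pi_k)$. The leaves partition as $L = L_1 \sqcup \cdots \sqcup L_k$ with $L_i$ contained in $\Gamma(\pi_i)$. By the definition of $\oplus$, the pattern induced by the positions of $L$ in $\pi$ is the direct sum of the patterns induced by the nonempty $L_i$ in the corresponding $\pi_i$, and by induction each is a separable permutation $\tau_i$ whose decomposition tree is the reduced skeleton of $L_i$ in $\Gamma(\pi_i)$. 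Thus the overall pattern is $\tau_1 \oplus \cdots \oplus \tau_k$ after deleting empty parts.

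What remains is to verify that the reduced skeleton of $L$ in $\Gamma(\pi)$ coincides with $\Gamma(\tau_1 \oplus \cdots \oplus \tau_k)$. If $L$ lies entirely within a single $\pi_i$ the claim reduces to induction inside $\Gamma(\pi_i)$. Otherwise every least common parent of a subset of $L$ meeting two or more $L_i$ is the root of $\Gamma(\pi)$, while every other least common parent lies inside some $\Gamma(\pi_i)$; hence the skeleton of $L$ is obtained by attaching the skeletons of the individual $L_i$ as subtrees under the root $\oplus$ of $\Gamma(\pi)$. Because each $\pi_i$ is sum-indecomposable, the root of $\Gamma(\pi_i)$, and hence that of the reduced skeleton of $L_i$, is either a leaf or labelled $\ominus$ \emph{unless} $\tau_i$ is itself sum-decomposable; in the latter case reduction merges the root of $\Gamma(\tau_i)$ with the root $\oplus$ above it, promoting the decomposition trees of the sum-indecomposable summands of $\tau_i$ to become children of the root. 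This is precisely how $\Gamma(\tau_1 \oplus \cdots \oplus \tau_k)$ is constructed from the canonical decompositions of its summands.

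The hard part of the argument is this last bookkeeping step: matching the root-level contraction in the reduced skeleton against the flattening of nested direct sums that produces the canonical decomposition of $\tau_1 \oplus \cdots \oplus \tau_k$. The rest is an unwinding of definitions, but one does have to observe carefully why sum-indecomposability of each $\pi_i$ forces reduction at the top to occur if and only if flattening is required at the top, so that the two constructions agree child-for-child.
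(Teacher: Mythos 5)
Your proof is correct, but it takes a genuinely different route from the paper's. The paper argues pairwise: for any two leaves of $\Gamma(\pi)$, their left-to-right order records the positional order of the corresponding entries of $\pi$, and the sign of their least common parent records their value order; since reduction only merges same-signed nodes, the label of a least common parent in the reduced skeleton agrees with its label in $\Gamma(\pi)$, so ``reduced skeleton equals $\Gamma(\sigma)$'' translates directly into ``the marked entries form an occurrence of $\sigma$,'' giving both directions and the positional claim at once. You instead prove, by structural induction on the $\oplus$/$\ominus$ decomposition, the stronger statement that the pattern induced by \emph{any} leaf subset $L$ is exactly the permutation whose decomposition tree is the reduced skeleton of $L$. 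Your inductive step is sound, but note that in the final bookkeeping you are implicitly using that the reduction operation is confluent --- that reducing each $\Gamma(\pi_i)$-part first and then contracting at the root yields the same tree as reducing the whole skeleton of $L$; this holds because the contraction classes are precisely the maximal connected same-signed subsets of the skeleton, and it deserves an explicit sentence. What your approach buys is a careful verification that the reduced skeleton of a leaf set is itself a valid decomposition tree and coincides with $\Gamma$ of the induced pattern, details the paper's short argument leaves to the reader; what the paper's approach buys is brevity, since the pairwise characterization of position and value makes the equivalence essentially immediate.
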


\begin{proof}
This follows from the fact that the positional relationship between any pair of entries of $\pi$ is determined by the left-to-right order of the corresponding leaves of $\Gamma(\pi)$ while the value relationship is determined by the sign of the least common parent of those leaves. That is, if $i < j$ then the leaf corresponding to $\pi_i$ is to the left of the leaf corresponding to $\pi_j$, and if $\pi_i < \pi_j$ then their least common parent is an $\oplus$-node, while if $\pi_i > \pi_j$ their least common parent is an $\ominus$-node. Further, in the reduced skeleton of $L$, while the least common parent of a pair of leaves may correspond to a contraction of their least common parent in $\Gamma(\pi)$ with some ancestral node, all nodes contracted into a single node share a common label.
\end{proof}

Define a \emph{$\sigma$-marked decomposition tree} to be a tree with marked leaves, such that the reduced skeleton of the marked leaves is equal to $\Gamma(\sigma)$. By the first observation of this section about the correspondence between separable permutations and their decomposition trees and the result above it follows that the number of occurrences of $\sigma$ in separable permutations of length $n$ is equal to the number of distinct $\sigma$-marked trees with $n$ leaves. When we have a $\sigma$-marked tree we will often consider the nodes belonging to the skeleton of the marked leaves to be marked as well.

We now seek to describe some transformations of decomposition trees that preserve equipopularity. Since the labelling of a decomposition tree is uniquely determined by the label of its root, we permit some transformations that might require changing labels in some subtrees to ensure the alternation of $\oplus$ and $\ominus$ labels on internal nodes.

\begin{proposition} \label{prop:node_switch}
Let $\sigma$ be a separable permutation and let $v$ and $w$ be nodes of $\Gamma(\sigma)$ such that neither is an ancestor of the other. Let $\Gamma(\tau)$ be the decomposition tree obtained by exchanging the subtrees rooted at $v$ and $w$ in $\Gamma(\sigma)$. Then $\sigma$ and $\tau$ are equipopular.
\end{proposition}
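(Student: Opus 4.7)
The plan is to build, for each $n$, an explicit bijection $\Phi$ between $\sigma$-marked separable permutations of length $n$ and $\tau$-marked ones. Since $\nu_\rho(\Sep_n)$ equals the number of $\rho$-marked separable permutations of length $n$, any such bijection yields $\sigma\sim\tau$. Via the tree bijection $\Gamma$ together with the preceding lemma, I would recast the problem as a bijection between $\sigma$-marked and $\tau$-marked decomposition trees on $n$ leaves.

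Given a $\sigma$-marked tree $T$, each internal node $v$ of $\Gamma(\sigma)$ arises, under reduction, from a unique nonempty same-sign chain $N_v$ in the skeleton of $T$'s marked leaves; write $\hat v$ for the topmost node of that chain (with $\hat v=v$ when $v$ is itself a marked leaf). A short argument, using the identification between least common parents in $T$ and those in $\Gamma(\sigma)$, shows that the subtree $T_{\hat v}\subseteq T$ rooted at $\hat v$ contains exactly the marked leaves corresponding to descendants of $v$ in $\Gamma(\sigma)$. Because $v$ and $w$ are incomparable, the subtrees $T_{\hat v}$ and $T_{\hat w}$ are vertex-disjoint. I would define $\Phi(T)$ by interchanging these two subtrees in $T$, and, whenever the swap creates a same-sign parent--child pair (which can only occur when $v$ and $w$ themselves have opposite signs), contracting such pairs into single nodes so as to restore a valid decomposition tree.

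To finish, I would verify three things: that $\Phi(T)$ is a valid decomposition tree with $n$ leaves; that the reduced skeleton of its marked leaves is exactly $\Gamma(\tau)$, because the local tree operation performed in $T$ at $\hat v$ and $\hat w$ (swap, then possibly contract) is precisely the operation converting $\Gamma(\sigma)$ to $\Gamma(\tau)$ at the level of reduced skeletons; and that $\Phi$ is a bijection, with inverse obtained by the same construction applied to $\Phi(T)$ at the chains corresponding to the swapped positions in $\Gamma(\tau)$.

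The main obstacle is the opposite-sign case, where the swap forces contractions that merge one of $\hat v,\hat w$ into a chain above it. I would need to check, first, that after contraction the reduced skeleton really becomes $\Gamma(\tau)$, and second, that the inverse can unambiguously recover which portion of the merged chain belonged to the absorbed subtree. The essential observation making both points go through is that contraction only identifies adjacent same-sign skeleton nodes and therefore leaves the underlying permutation fixed; the inverse swap is then pinned down by re-identifying the new topmost chain nodes in $\Phi(T)$ using its own skeleton.
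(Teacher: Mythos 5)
Your overall strategy---a leaf-count-preserving bijection between $\sigma$-marked and $\tau$-marked decomposition trees implemented by a local swap---is the same as the paper's, but the place you choose to perform the swap creates a genuine gap: the map you describe is not injective. You swap the subtrees rooted at $\hat v$ and $\hat w$, the topmost skeleton nodes of the chains corresponding to $v$ and $w$. But the parent of $\hat v$ in $T$ need not belong to the skeleton at all: there can be unmarked internal nodes strictly between the chain corresponding to $v'$ (the parent of $v$ in $\Gamma(\sigma)$) and $\hat v$, and the sign of such a node is not controlled by $\Gamma(\sigma)$. When that parent happens to have the same sign as $\hat w$, your contraction step merges $\hat w$ into it, and the boundary between the absorbed children and the pre-existing ones is irrecoverable. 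Concretely, take $\sigma=132$, with $v$ the leaf child of the root and $w$ the $\ominus$-child, so $\tau=213$. Consider the two $132$-marked trees $T=\oplus[\ominus[\ell_v,u],\,\ominus[\ell_1,\ell_2]]$ and $T'=\oplus[\ell_v,\,\ominus[\ell_1,\ell_2,u]]$, where $\ell_v,\ell_1,\ell_2$ are marked and $u$ is unmarked (these are the permutations $2143$ and $1432$ with the indicated occurrences of $132$). In $T$ your rule swaps $\ell_v$ with $\ominus[\ell_1,\ell_2]$, producing a $\ominus$-node with a $\ominus$-child, which you must contract; in $T'$ it swaps $\ell_v$ with $\ominus[\ell_1,\ell_2,u]$ with no contraction. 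Both yield the same $213$-marked tree $\oplus[\ominus[\ell_1,\ell_2,u],\,\ell_v]$, so your $\Phi$ is two-to-one here, and your proposed inverse recovers $T'$ but not $T$. (Avoiding the contraction by instead complementing the moved subtree does not help either: applied to $T$ it produces $\oplus[\ominus[\oplus[\ell_1,\ell_2],u],\,\ell_v]$, whose reduced skeleton is $\Gamma(123)$, not $\Gamma(213)$.)

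The repair is exactly the paper's construction: swap one level higher. Among the skeleton nodes $V'$ corresponding to $v'$ there is a unique one having a child $c_v$ (a child in $T$, not merely in the skeleton) such that the marked leaves below $c_v$ are precisely those corresponding to the leaves below $v$, and similarly $c_w$ for $w'$. Exchanging the subtrees rooted at $c_v$ and $c_w$ carries the intervening unmarked structure along with the marks, and because the new parent of the moved subtree lies in $V'$ (hence has the sign of $v'$), at worst a global sign flip of the moved subtrees is needed to restore alternation---an invertible operation, unlike contraction. In the example above this sends $T$ to $\oplus[\ominus[\ell_1,\ell_2],\,\ominus[\ell_v,u]]$ and $T'$ to $\oplus[\ominus[\ell_1,\ell_2,u],\,\ell_v]$, which are distinct, and applying the same recipe to the images inverts the map.
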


\begin{proof}
We describe a bijection between $\sigma$-marked trees and $\tau$-marked trees which preserves the number of leaves.  Note that by the ancestry condition, both $v$ and $w$ must be internal nodes or leaves, and so have parents $v'$ and $w'$ respectively.
    
Let $T$ be any $\sigma$-marked tree. In the marked skeleton of $T$, $v'$ may correspond to a set of nodes, $V'$. However, there is a node in $V'$ that has a child $c_v$ such that all the marked leaves below $c_v$ correspond to leaves of the subtree rooted at $v$ (and all such leaves occur below $c_v$). Correspondingly, among the set of nodes $W'$ corresponding to $w'$ there is one that has a child $c_w$ such that all the marked leaves below $c_w$ correspond to leaves of the subtree rooted at $w$. Form a new marked decomposition tree by exchanging the subtrees rooted at $c_v$ and $c_w$ in $T$. This new tree is $\tau$-marked. This process is clearly invertible and preserves the number of leaves. Therefore, $\sigma$ and $\tau$ are equipopular. 

See Figure~\ref{fig:shuffle} for an example. 
\end{proof}


Using just this proposition, we can repeatedly exchange the rightmost leaf of a tree with any internal node not in the rightmost branch producing, from some arbitrary initial separable permutation $\sigma$, a chain of equipopular permutations and culminating in one, $\tau$ where all but the rightmost child of any node are leaves. We will see analytically below that any two such $\tau$ are equipopular provided that the multisets of degrees of their internal vertices are equal. However, the following result also provides an alternative bijective proof of this fact -- thereby allowing in principal the construction of bijections between $\sigma$-marked and $\tau$-marked permutations whenever $\sigma \sim \tau$ in the separable permutations.

  \begin{figure}[t]
    \centering
    \subfloat[The second pattern can be obtained from the first by
    interchanging the first two children of the root.]{
    \begin{tikzpicture}[scale=.4]
    \node[label=east:{}] (A) at (5,10) {$\oplus$};
    \node[dot] (b1) at (3,8) {};
    \node (b2) at (5,8) {$\ominus$};
    \node (b3) at (7,8) {$\ominus$};
    \node[dot] (c1) at (4.5,6) {};
    \node[dot] (c2) at (5.5,6) {};
    \node[dot] (c3) at (6.5,6) {};
    \node[dot] (c4) at (7,6) {};
    \node[dot] (c5) at (7.5,6) {};
    \foreach \to/\from in 
      {A/b1, A/b2, A/b3, b2/c1, b2/c2, b3/c3, b3/c4, b3/c5}
    \draw (\to) -- (\from);
    \end{tikzpicture}
    \hspace{2pc}
    \begin{tikzpicture}[scale=.5]
    \node[label=east:{}] (A) at (5,10) {$\oplus$};
    \node[dot] (b1) at (5,8) {};
    \node (b2) at (3,8) {$\ominus$};
    \node (b3) at (7,8) {$\ominus$};
    \node[dot] (c1) at (2.5,6) {};
    \node[dot] (c2) at (3.5,6) {};
    \node[dot] (c3) at (6.5,6) {};
    \node[dot] (c4) at (7,6) {};
    \node[dot] (c5) at (7.5,6) {};
    \foreach \to/\from in 
      {A/b1, A/b2, A/b3, b2/c1, b2/c2, b3/c3, b3/c4, b3/c5}
    \draw (\to) -- (\from);
    \end{tikzpicture}
    } \hfill
    \subfloat[Any tree with the first pattern marked can be transformed into one
    with the second pattern marked. The set of nodes in the dotted box
    correspond to the root node of the pattern.]{
    \begin{tikzpicture}[scale=.4]
    \node (A1) at (5,10) {$\oplus$};
    \node (A2) at (7,8) {\color{black!25}{$\ominus$}};
    \node (A3) at (9,6) {$\oplus$};
    \node[dot] (d1) at (3,8) {};
    \node[gdot] (d2) at (5.5,6) {};
    \node (B1) at (8,4) {$\ominus$};
    \node (B2) at (10,4) {$\ominus$};
    \node[dot] (c1) at (7.5,2) {};
    \node[dot] (c2) at (8.5,2) {};
    \node[dot] (c3) at (9.5,2) {};
    \node[dot] (c4) at (10,2) {};
    \node[dot] (c5) at (10.5,2) {};
    \foreach \to/\from in {A1/d1, A1/A2, A2/A3, A3/B1, A3/B2, 
          B1/c1, B1/c2, B2/c3, B2/c4, B2/c5}
      \draw[thick] (\to) -- (\from);
    \draw[black!25] (A2) -- (d2);
    \draw[rounded corners=1mm, dotted] (A1.north) -- (A1.west) -- (A3.south) --
    (A3.east) -- cycle;
    \end{tikzpicture}
    \hspace{2pc}
    \begin{tikzpicture}[scale=.5]
    \node (A1) at (5,10) {$\oplus$};
    \node (A2) at (7,8) {\color{black!25}{$\ominus$}};
    \node (A3) at (9,6) {$\oplus$};
    \node[dot] (d1) at (8,4) {};
    \node[gdot] (d2) at (5.5,6) {};
    \node (B1) at (3,8) {$\ominus$};
    \node (B2) at (10,4) {$\ominus$};
    \node[dot] (c3) at (9.5,2) {};
    \node[dot] (c4) at (10,2) {};
    \node[dot] (c5) at (10.5,2) {};
    \node[dot] (c1) at (2.5,6) {};
    \node[dot] (c2) at (3.5,6) {};
    \foreach \to/\from in {A3/d1, A1/A2, A2/A3, A1/B1, A3/B2, 
          B1/c1, B1/c2, B2/c3, B2/c4, B2/c5}
      \draw[thick] (\to) -- (\from);
    \draw[black!25] (A2) -- (d2);

    \draw[rounded corners=1mm, dotted] (A1.north) -- (A1.west) -- (A3.south) --
    (A3.east) -- cycle;
    \end{tikzpicture}
    }
\caption{Interchanging of children of a node within a pattern can be extended
to the transformation of a marked pattern within a larger tree.}
  \label{fig:shuffle}
  \end{figure}
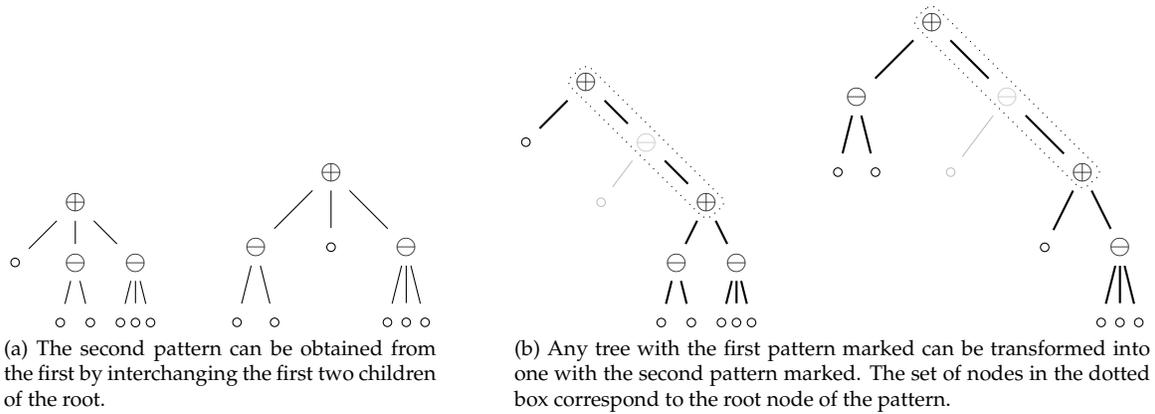

\begin{definition}
Let $a$ be an internal node of a decomposition tree $T$, with rightmost child $b$ that is also internal. Let $c$ be the rightmost child of $b$. Let $T_a$, $T_b$, and $T_c$ denote the subtrees rooted at $a$, $b$ and $c$ respectively. Form the tree $T_{ab}$ by attaching $T_c$ to $T_a \setminus T_b$ at the position $b$ occupied, then attaching this to $T_b \setminus T_c$ at the position $c$ occupied, and finally attaching this to $T \setminus T_a$ at the position $a$ occupied. Then we will call $T_{ab}$ a \emph{forest exchange} of $T$.
\end{definition}

An alternative description of $T_{ab}$ (and one that accounts for the name) is that it is obtained from $T$ by exchanging all but the rightmost tree of the forests below $a$ and $b$ respectively. See Figure \ref{fig:rotation} for illustration.

\begin{figure}
		\minipage{0.45\textwidth}
			\begin{center}
				 \begin{tikzpicture}[scale=.745]
                                        \node (top) at (2.5,5) {};
                                        \node[circle, draw=black, inner sep=.5mm, outer sep=2mm, label=north east:{$a$}] (a) at (3,4) {};
                                        \node[circle, draw=black, inner sep=.5mm, outer sep=2mm, label=north east:{$b$}] (b) at (4,2) {};
                                        \node[circle, draw=black, inner sep=.5mm, outer sep=2mm, label=north east:{$c$}] (c) at (5,0) {};
                                        \node (bot) at (5.5,-1) {};
                                        
                                        \node[outer sep=4mm] (F) at (2.6,2.5) {$F$};
                                        \node[outer sep=4mm] (G) at (3.6,0.5) {$G$};
                                        
                                        \draw (a) -- (b);
                                        \draw (b) -- (c);
                                        
                                        \draw[dotted] (a) -- (top);
                                        \draw[dotted] (c) -- (bot);
                                        
                                        \draw[black!25] (a) -- ++(.3, -2) -- ++(-1.7,0) -- (a);
                                        \draw[black!25] (b) -- ++(.3, -2) -- ++(-1.7,0) -- (b);
                                        
                                        \draw[<->] (F) --  (G);
                                    \end{tikzpicture}
			\end{center}
			\caption{When exchanging across $ab$, the forests denoted by $F$ and $G$ are interchanged. }
			\label{fig:rotation}
		\endminipage\hfill
		\minipage{0.45\textwidth}
			\begin{center}
				\begin{tikzpicture}[scale=.5]
                                            \node[circle, inner sep=.5mm, outer sep=1mm] (1) at (5,10) {$\oplus$};
                                            \node[circle, inner sep=.5mm, outer sep=1mm] (21) at (6,8) {$\ominus$};
                                            \node[label=below:{$\lambda_1$}] (22) at (4,8) {$\dots$};
                                        
                                            \node[circle, inner sep=.5mm, outer sep=1mm] (31) at (7,6) {$\oplus$};
                                            \node[label=below:{$\lambda_2$}] (32) at (5,6) {$\dots$};
                                        
                                            \node[circle, inner sep=.5mm, outer sep=1mm] (41) at (9,2) {$\circ$};
                                            \node[label=below:{$\lambda_{k-1}$}] (42) at (7,2) {$\dots$};
                                        
                                            \node[label=below:{$\lambda_{k} + 1$}] (5) at (9,0) {$\dots$};
                                            \node[outer sep = 3mm] (inv) at (8,4) {};
                                        
                                            \draw[thick, dotted] (31) -- (inv);
                                            \draw[thick, dotted] (41) -- (inv);
                                            \draw[thick, dotted] (42) -- (inv);
                                        
                                            \foreach \to/\from in {1/21, 1/22, 21/31, 21/32, 41/5}
                                              \draw (\to) -- (\from);
                                \end{tikzpicture}
			\end{center}
			\caption{The tree $t(\lambda)$, where $\lambda = \lambda_1, \lambda_2, \dots, \lambda_k$. The numbers indicate the number of leaf children, and the sign of the lowest internal node is determined by the parity of its height.}
			\label{fig:talltree}
		\endminipage
	\end{figure}

\begin{proposition}\label{prop:abexchange}
Let $\sigma$ be a separable pattern, and let $a$ and $b$ be internal nodes of $\Gamma(\sigma)$ for which a forest-exchange is possible. Let $\tau$ be the separable pattern such that $\Gamma(\tau)$ is equal to the tree resulting from the forest-exchange. Then $\sigma$ and $\tau$ are equipopular.
\end{proposition}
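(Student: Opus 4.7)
The plan is to mirror the proof of Proposition \ref{prop:node_switch} by exhibiting a leaf-count-preserving bijection between $\sigma$-marked decomposition trees and $\tau$-marked decomposition trees. From any $\sigma$-marked tree $T$, I would construct a $\tau$-marked tree $T'$ on the same underlying leaf set by performing the forest exchange ``inside'' $T$ rather than inside $\Gamma(\sigma)$; the map from $T$ to $T'$ will be manifestly invertible, and equipopularity will then follow immediately.

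The first step is to locate inside $T$ the three nodes that play the roles of $a$, $b$, and $c$. Exactly as in the proof of Proposition \ref{prop:node_switch}, each internal node $v$ of $\Gamma(\sigma)$ has a specific incarnation $\hat{v}$ in $T$: namely, the unique child in $T$ of some representative of $v$'s parent whose subtree contains exactly the marked leaves of the $v$-subtree of $\Gamma(\sigma)$. Applying this recipe produces $\hat{a}$, then (iterating inside the subtree rooted at $\hat{a}$) $\hat{b}$, and finally $\hat{c}$. By construction, $\hat{b}$ is the particular child below $\hat{a}$ whose marked descendants are exactly the $b$-subtree marked leaves, and $\hat{c}$ below $\hat{b}$ plays the analogous role.

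The second step is to carry out the forest exchange in $T$ at $\hat{a}$ and $\hat{b}$: swap the forest of siblings of $\hat{b}$ inside $T_{\hat{a}}$ with the forest of siblings of $\hat{c}$ inside $T_{\hat{b}}$, relabelling $\oplus$/$\ominus$ signs as needed to preserve alternation. Because each transported sub-forest carries its own marked leaves and internal skeleton structure intact, the reduced skeleton of the marked leaves in the resulting tree $T'$ agrees with the reduced skeleton of $T$ away from the $a$- and $b$-regions, and differs locally by precisely the same forest exchange in $\Gamma(\sigma)$ that produces $\Gamma(\tau)$. Hence $T'$ is $\tau$-marked with the same number of leaves as $T$, and running the identification-plus-swap procedure on $T'$ recovers $T$.

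The main obstacle I anticipate is verifying that the swap yields a legitimate decomposition tree whose reduced skeleton really is $\Gamma(\tau)$. Two subtleties must be tracked: first, because $a$ and $b$ may each correspond to a set of contracted same-sign skeleton nodes in $T$ rather than to a single node, the points $\hat{a}$ and $\hat{b}$ can be non-skeleton intermediates and their sibling-forests can mix marked skeleton subtrees with purely unmarked subtrees, all of which must move together; second, reattaching a sub-forest whose root acquires a new parent of the ``wrong'' sign may force sign flips and subsequent contractions of same-sign adjacent nodes to return to canonical form. Neither issue alters the marked-leaf set or the leaf count, so once these bookkeeping details are handled — in direct analogy with Figure \ref{fig:shuffle} — the bijection, and hence equipopularity of $\sigma$ and $\tau$, follows.
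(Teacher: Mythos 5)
Your overall strategy --- locate incarnations of $a$, $b$, $c$ in a $\sigma$-marked tree $T$ and perform the exchange there --- is the paper's strategy, but the operation you actually specify is too local, and it fails. You swap only ``the forest of siblings of $\hat{b}$'' with ``the forest of siblings of $\hat{c}$,'' i.e.\ the subtrees hanging off the \emph{single} node that is $\hat{b}$'s parent (resp.\ $\hat{c}$'s parent). The difficulty is that when the node $a$ of $\Gamma(\sigma)$ is realized in $T$ by a chain $A = \{a_1, \dots, a_k\}$ of two or more same-sign skeleton nodes that get contracted together, the incarnations of the forest $F$ (the non-rightmost children of $a$) hang off \emph{several different} nodes of that chain. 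Your swap transports only those attached at $\hat{b}$'s parent and leaves the rest in place; after the swap those leftover $F$-subtrees still contract into the same reduced-skeleton node as the newly arrived $G$-subtrees, so the reduced skeleton is not $\Gamma(\tau)$. Concretely, take $\sigma = 1243$, with $\Gamma(\sigma) = \oplus(f_1, f_2, b)$, $b = \ominus(g,c)$, and exchange across $(a,b)$, so $F = \{f_1,f_2\}$, $G=\{g\}$, and $\tau \ne \sigma$. Mark $\pi = 15243$ at positions $1,3,4,5$: the tree is $\oplus(\ell_1, \ominus(u, \oplus(\ell_2, \ominus(\ell_3,\ell_4))))$ with $A$ consisting of the two $\oplus$-nodes, $\hat{b} = \ominus(\ell_3,\ell_4)$, $\hat{c} = \ell_4$. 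The siblings of $\hat{b}$ are just $\{\ell_2\}$ and the siblings of $\hat{c}$ are just $\{\ell_3\}$; swapping them returns exactly the same marked permutation, which is still $\sigma$-marked and not $\tau$-marked. The subtlety you flag at the end (sibling forests mixing marked and unmarked subtrees) is real but is not this problem; the multi-level spread of $F$ along the chain $A$ is what breaks the construction.

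The paper avoids this by working with the \emph{eldest} (topmost) representatives $a'$, $b'$, $c'$ of $A$, $B$, $C$ and transplanting the entire blocks $T_{a'}\setminus T_{b'}$ and $T_{b'}\setminus T_{c'}$ as units, reattaching $T_{c'}$ as the rightmost child of the rightmost (deepest) vertex of $A$, and similarly for the other reattachment. Because $T_{a'}\setminus T_{b'}$ contains the whole chain $A$ together with every subtree --- marked or unmarked --- hanging anywhere along it, all incarnations of $F$ travel together, and the reduced skeleton of the result is genuinely $\Gamma(\tau)$. To repair your argument you would need to replace the single-level sibling swap with this block transplant (or otherwise move the full forests attached along the chains $A$ and $B$), and then re-examine invertibility for the corrected operation.
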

  
\begin{proof}
The proof is analogous to that of the preceding proposition. Let a $\sigma$-marked tree $T$ be given. Among the nodes $A$ of $T$ which correspond to node $a$ of $\Gamma(\sigma)$ there is an eldest $a'$. Similarly define nodes $b'$ and $c'$. Now perform the analog of the $ab$-exchange in $T$, but attaching, e.g., $T_{c'}$ to $T_{a'} \setminus T_{b'}$ as the rightmost child of the rightmost vertex in $A$ (and similarly for the other parts of the exchange). The resulting tree is  $\tau$-marked and the operation is invertible, proving the result.
\end{proof}

%

\subsection{Identifying Equipopular Permutations}

The remainder of this section develops a method of identifying the equipopularity classes, and constructing a canonical representative from each.  We first build a correspondence between partitions and classes. 

\begin{definition}
Let $T$ be a decomposition tree, and let $i_1, i_2, \dots, i_r$ be the internal nodes of $T$.  The \emph{signature} of $T$ is the multiset $\{d(i_j) - 1\}_{j=1}^r$, where $d(i_j)$ denotes the down degree (the number of immediate children) of the node $i_j$. 
\end{definition}
  
For every partition $\lambda$ of an integer $n$, there exists a decomposition tree with $n+1$ leaves having signature $\lambda$, as shown in Figure~\ref{fig:talltree}. Denote this tree by $T(\lambda)$, and let $\omega(\lambda)$ be the permutation for which $\Gamma(\omega(\lambda)) = T(\lambda)$. 

This family of trees provides a canonical element for each equipopularity class. Indeed, the following theorem shows that every decomposition tree can be transformed into such a $\lambda$-tree through the use of the equipopularity-preserving operations introduced in the previous section. 

\begin{theorem}
Let $\sigma$ be a permutation, and suppose that $\Gamma(\sigma)$ has signature $\lambda$. Then $\sigma$ is equipopular to $\omega(\lambda)$. 
\end{theorem}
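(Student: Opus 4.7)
My plan is to show that, starting from $\Gamma(\sigma)$, a finite sequence of the equipopularity-preserving operations from the previous subsection carries $\Gamma(\sigma)$ to $T(\lambda) = \Gamma(\omega(\lambda))$. The tools are the subtree exchanges of Proposition~\ref{prop:node_switch}, the forest exchanges of Proposition~\ref{prop:abexchange}, and the dihedral symmetries of $\mathfrak{S}$. None of these alters the multiset of down-degrees of internal nodes, so the signature $\lambda$ is preserved throughout.

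The reduction proceeds in three stages. First, reshape $\Gamma(\sigma)$ into a \emph{caterpillar}, meaning a decomposition tree in which every internal node lies on the rightmost branch from the root. Second, on the caterpillar so obtained, apply forest exchanges to sort the signature contributions along the rightmost chain into the decreasing order dictated by $\lambda$. Third, if the root sign of the resulting tree disagrees with that of $T(\lambda)$, apply complementation (a dihedral symmetry) to flip all signs.

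For the first stage, call an internal node \emph{bad} if it does not lie on the overall rightmost branch, and induct on the number of bad nodes. If at least one bad node exists, descend through bad nodes to find a \emph{minimal} bad node $v$, meaning one whose internal descendants all lie on $v$'s own rightmost path; such a $v$ is produced by starting from any bad node and iteratively passing to a non-rightmost internal descendant within the current subtree, a process that terminates by strict subtree containment. Swap $v$ with the rightmost leaf $\ell$ of the entire tree using Proposition~\ref{prop:node_switch}; the non-ancestor condition holds because $\ell$ is a leaf and $v$ is off the rightmost branch. After the swap, $v$'s subtree is attached at the end of the overall rightmost branch, placing every internal node of $v$'s subtree onto that branch and so eliminating at least one bad node without spoiling any previously good one.

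For the second stage, suppose the caterpillar has internal chain $v_1, \ldots, v_k$ where $v_i$ (for $i<k$) has $m_i$ leaf children and rightmost child $v_{i+1}$, and $v_k$ has $m_k$ leaf children; the corresponding signature sequence along the chain is $(m_1, \ldots, m_{k-1}, m_k-1)$. A direct unpacking of Proposition~\ref{prop:abexchange} applied at $(v_i, v_{i+1})$ shows that the resulting chain is $v_1, \ldots, v_{i-1}, v_{i+1}, v_i, v_{i+2}, \ldots$, with each node's down-degree preserved (in the boundary case $i = k-1$ one leaf is transferred between the two nodes, but the two signature contributions still swap). Hence a forest exchange effects an adjacent transposition on the signature sequence, and composing such transpositions sorts it into weakly decreasing order, matching $\lambda$. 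The only substantive difficulty is the first stage; the second stage is combinatorial bookkeeping and the third is a single symmetry.
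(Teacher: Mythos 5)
Your proof is correct and follows essentially the same route as the paper: first use Proposition~\ref{prop:node_switch} to move every internal node onto the rightmost branch, then use forest exchanges (Proposition~\ref{prop:abexchange}) to permute the degrees along that branch. You supply somewhat more detail than the paper does (the minimal-bad-node induction, the boundary case of the last forest exchange, and the final complementation to fix the root sign), all of which checks out.
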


\begin{proof}
As noted previously, we can first use Proposition \ref{prop:node_switch} to find a tree equipopular to $\Gamma(\sigma)$ with the property that only the rightmost child of any node can be internal. The resulting tree is similar to $t(\lambda)$ except that the order of the degrees of the internal nodes may differ (the multiset of degrees is the same). But now we can arbitrarily permute these internal nodes (since they all lie in the rightmost branch) using a series of forest-exchanges and Proposition \ref{prop:abexchange}.
  \end{proof}
  
Translating back into the language of permutations, we have shown that every separable permutation of length $n$ is equipopular to one in a particular family, and that this family is in bijection with the set of partitions of the integer $n-1$. For a partition $\lambda = \lambda_1, \lambda_2, \dots, \lambda_k$, using the definition of the decomposition trees and letting $I(n)$ denote the identity permutation of length $n$, we have 

\[
\omega(\lambda) = 
\begin{cases}
I(\lambda_1) \oplus \omega(\lambda_2, \dots, \lambda_k)^c & \text{if $k > 1$} \\
I(\lambda_1+1) & \text{if $k = 1$}
\end{cases}
\]

The permutations $w(\lambda)$ are referred to as the \emph{wedge permutations}, due to the shape of their plots. See Figure~\ref{fig:wedge} for an illustration. 
  
\begin{figure}
  \centering
  \begin{tikzpicture}[scale=.4]
    \draw[very thick, dotted] (0,0)--(1,1) node[pos=.5, above left] {$\lambda_1$};
    \draw[very thick, dotted] (1.5,9)--(2.5,8) node[pos=.5, below left] {$\lambda_2$};
    \draw[very thick, dotted] (3,1.5)--(4,2.5) node[pos=.5, above left] {$\lambda_3$};
    \draw[very thick, dotted] (4.5,7.5)--(5.5,6.5) node[pos=.5, below left] {$\lambda_4$};
    \draw[very thick, dotted] (6,3)--(7,4) node[pos=.5, above left] {$\lambda_5$};

    \node at (8,5) {$\dots$};

  \end{tikzpicture}
  \caption{The plot of the wedge permutation $w(\lambda)$, where $\lambda = \lambda_1, \lambda_2, \dots, \lambda_k$. Recall that the size of the final monotone segment is $\lambda_k + 1$.} \label{fig:wedge}
\end{figure}
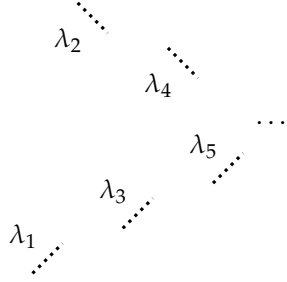

\section{Non-Equipopular Patterns}
\label{sec:nonequi}

In the previous section we proved that if two patterns correspond to the same integer partition, they are equipopular. Here we show that if two patterns correspond to different partitions, they do in fact have different enumerations, showing that the equipopularity classes are in bijection with integer partitions. The methods used here contrast sharply with those used previously: we utilize a variety of analytic techniques to prove this result. 

This section is split into two parts. Given the popularity generating function of a wedge permutation, we first show that we can factor it into generating functions for the popularity of the monotone runs in the pattern (along with some other terms). Next, we show that given the product of several such generating functions, we can identify the lengths of the monotone runs. Together this will show that each popularity generating function is unique to a specific wedge permutation.

\subsection{Factoring Popularity Generating Functions}

For $\sigma \in \mathcal{S}$, let $P_\sigma(t)$ be the generating function for the popularity of the pattern $\sigma$. That is, let 
\[
P_\sigma(t) = \sum_{n \geq 0} \nu_\sigma\left(\mathcal{S}_n\right)t^n.
\] 
Recall that we use $I(n)$ to denote the pattern $1 2 \cdots n$. 

%

\begin{lemma}\label{lem:factorization}
There exists a family of functions $\{F_m\}_{m=0}^\infty$ such that if $\pi$ is an arbitrary sum indecomposable separable permutation, and $m \geq 0$. Then $P_{I(m) \oplus \pi} = F_m P_\pi$. 
\end{lemma}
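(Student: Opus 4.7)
The case $m = 0$ is trivial with $F_0 = 1$, so we assume $m \geq 1$ and write $\sigma := I(m) \oplus \pi$; then $\sigma$ is sum decomposable while $\pi$ is sum indecomposable. My plan is a generating function calculation that isolates the $\pi$-dependence by exploiting two structural facts. First, since $\sigma$ is sum decomposable, no $\sigma$-occurrence can span the top-level skew decomposition of a skew decomposable $\rho$. Second, since $\pi$ is sum indecomposable, the $\pi$-part of any $\sigma$-occurrence must lie entirely within a single component of the top-level sum decomposition $\rho = \rho_1 \oplus \cdots \oplus \rho_p$ (each $\rho_i$ sum indecomposable). Indexing by the component $q$ holding the $\pi$-part and distributing the $I(m)$-part as $m_1 + \cdots + m_q = m$ among the leading $q$ components, I would obtain
\[
\nu_\sigma(\rho) = \sum_{q=1}^p \sum_{m_1 + \cdots + m_q = m} \prod_{i<q} \nu_{I(m_i)}(\rho_i) \cdot \nu_{I(m_q) \oplus \pi}(\rho_q).
\]

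Next I would translate this into a formula for $P_\sigma$ via bivariate generating functions. Let $U(t) = \sum_{\rho \text{ sum indec}} t^{|\rho|}$ and, for any pattern $\tau$, let $P^{\mathrm{SI}}_\tau(t) = \sum_{\rho \text{ sum indec}} \nu_\tau(\rho)\, t^{|\rho|}$; package the relevant quantities as
\[
J(x, t) = \sum_{i \geq 0} x^i\, P^{\mathrm{SI}}_{I(i)}(t), \qquad H_\pi(x, t) = \sum_{j \geq 0} x^j\, P^{\mathrm{SI}}_{I(j) \oplus \pi}(t).
\]
Summing the identity above over all sum-decomposable $\rho$, adding the sum-indecomposable contribution (which equals $[x^m] H_\pi$), and collapsing the resulting geometric sums over $p$ and $q$ should yield the compact expression
\[
P_\sigma(t) = [x^m]\, \frac{H_\pi(x, t)}{(1 - U(t))\bigl(1 - J(x, t)\bigr)}.
\]
As $U$ and $J$ are independent of $\pi$, the claim now reduces to factoring $P_\pi$ out of $H_\pi$.

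For the constant term, $H_{0, \pi} = P^{\mathrm{SI}}_\pi$; applying the same sum-decomposition analysis directly to $\nu_\pi$ (again using sum indecomposability of $\pi$) should give $P_\pi = P^{\mathrm{SI}}_\pi / (1 - U)^2$, hence $H_{0, \pi} = (1 - U)^2 P_\pi$. For $j \geq 1$, $I(j) \oplus \pi$ is sum decomposable, and unwinding $H_{j, \pi}$ through the skew decomposition of sum-indecomposable $\rho$ expresses it as a $\pi$-independent scalar in $t$ times $P^\oplus_{I(j) \oplus \pi}(t)$; by the formula of the previous paragraph, this equals $[x^j]$ of a $\pi$-independent rational function $Q(x, t)$ times $H_\pi$ itself. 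Collecting terms produces a self-referential equation of the form
\[
H_\pi(x, t)\bigl(1 - Q(x, t)\bigr) = (1 - U(t))^2 \bigl(1 - Q(0, t)\bigr)\, P_\pi(t),
\]
whose solution $H_\pi = P_\pi \cdot R(x, t)$ has $R$ manifestly independent of $\pi$. Substituting back into the previous paragraph's formula gives the desired factorization $P_\sigma = F_m \cdot P_\pi$ with $F_m(t) = [x^m]\, R(x, t)/[(1 - U(t))(1 - J(x, t))]$. The main obstacle will be setting up and solving this self-referential equation for $H_\pi$; the key insight that makes it work is that the entire $\pi$-dependence enters only through the base case $H_{0, \pi} = (1 - U)^2 P_\pi$, enabling clean separation of variables.
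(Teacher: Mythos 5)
Your argument is correct and rests on exactly the same two structural facts as the paper's proof: a sum-decomposable pattern must lie inside a single skew-indecomposable component of a skew-decomposable permutation, while in a sum-decomposable permutation the (sum-indecomposable) $\pi$-part of a marking occupies a single component and the $I(m)$-marks are distributed over the components up to and including that one. The only genuine difference is bookkeeping: the paper resolves the resulting recursion by induction on $m$, splitting $P_{I(m)\oplus\pi}$ into its sum- and skew-decomposable parts and solving a two-by-two linear system in which the inductive hypothesis supplies $P_{I(m-i)\oplus\pi}=F_{m-i}P_\pi$, whereas you package all $m$ at once into the bivariate series $H_\pi(x,t)$ and solve a single linear functional equation, isolating the $\pi$-dependence through the base relation $H_\pi(0,t)=(1-U)^2P_\pi$. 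Both routes go through; yours trades the induction for the (easily checked, since $Q$ has no constant term in $t$) invertibility of $1-Q(x,t)$ as a formal power series, and each step you flag as ``should yield'' does in fact work out as stated.
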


\begin{proof}
Let $P_{I(m) \oplus \pi}^\oplus$ and $P_{I(m) \oplus \pi}^\ominus$ denote the popularity generating functions of the pattern $I(m) \oplus \pi$ within the sum and skew decomposable separable permutations, and let $P_{I(m) \oplus \pi}^{\oplus, \bullet}$ denote the popularity generating function within skew \emph{indecomposable} permutations. Let $S$ denote the generating function for the separable permutations including the empty permutation, i.e., with constant term one. 

We proceed by induction on $m$, noting that for the base case $m = 0$ the result is trivial ($F_0 = 1$).  

Suppose that $m \geq 1$. Since the length of $I(m) \oplus \pi$ is at least two, the permutation of length one does not contain $I(m) \oplus \pi$. Then, since every permutation of $S$ of length at least two is either sum or skew decomposable, we have

\begin{equation} \label{eqn:factorization1}
      P_{I(m) \oplus \pi} = 
      P_{I(m) \oplus \pi}^\oplus + P_{I(m) \oplus \pi}^\ominus.
\end{equation}

Suppose that $\alpha$ is an $I(m) \oplus \pi$-marked skew decomposable permutation. Any skew decomposable permutation can be decomposed uniquely into skew indecomposable parts (see Figures~\ref{fig:factorizationA} and \ref{fig:factorizationB}). Since the pattern $I(m) \oplus \pi$ is sum decomposable, if a skew decomposable permutation is $I(m) \oplus \pi$ marked, then all the marks must occur within a single one of these components (Figure~\ref{fig:factorizationA}). Such components are enumerated by $P_{I(m) \oplus \pi}^\oplus$. The remainder of the permutation is therefore unmarked. The portions of the permutation to the left and to the right of the marked component are arbitrary, and are each counted by $S$. If the permutation itself is skew decomposable, either the left or the right part must be nonempty. This leads to the following relationships

\begin{equation}\label{eqn:factorization3}
      P_{I(m) \oplus \pi}^\ominus = 
        (S^2-1) P_{I(m) \oplus \pi}^\oplus.
\end{equation}

The case for marked instances of $I(m) \oplus \pi$ in sum decomposable permutations is more complicated. As above, suppose that $\alpha$ is an $I(m) \oplus \pi$-marked sum decomposable permutation. Such a permutation can be decomposed into sum indecomposable parts, and marks can occur in multiple such parts (Figure~\ref{fig:factorizationB}). Consider the components from left to right. There may be an arbitrary number of unmarked parts before the first marked one; this segment is counted by $S$.  The first marked part may have $i$ marked entries, where $1 \leq i \leq m$, or might contain all the marked entries (since $\pi$ is sum-indecomposable, if any mark of $\pi$ occurs in a sum indecomposable part, then all the marks must). The remainder of the permutation is then counted by $P_{I(m-i) \oplus \pi}$, in the first case, or $S$ in the second case. This leads to the following relationship:

    \begin{equation}\label{eqn:factorization4}
      P_{I(m) \oplus \pi}^\oplus = (S^2 - 1) P_{I(m) \oplus \pi}^\ominus
        + S \sum_{i = 1}^m P_{I(i)}^{\ominus, \bullet} 
        P_{I(m-i) \oplus \pi}.
    \end{equation}

    By our inductive hypothesis, we have that $P_{I(m-i) \oplus \pi} = F_{m-i} P_\pi$. Combining equations~\ref{eqn:factorization3} and \ref{eqn:factorization4} yields
    
    \begin{equation}\label{eqn:factorization5}
      P_{I(m) \oplus \pi}^\oplus = (S^2 - 1)^2 P_{I(m) \oplus \pi}^\oplus
        + S P_\pi \sum_{i = 1}^m  P_{I(i)}^{\ominus, \bullet} 
        F_{m-i}.
    \end{equation}
      
    Rearranging and combining with equation~\ref{eqn:factorization3} gives

    \begin{equation}\label{eqn:factorization6}
      P_{I(m) \oplus \pi} = \left(
      \frac{S^3 \sum_{i = 1}^m  P_{I(i)}^{\ominus, \bullet} 
      F_{m-i}}{1 - (S^2-1)^2} \right) P_\pi.
    \end{equation}

    Noting that the first factor in equation~\ref{eqn:factorization6} is independent of $\pi$ completes the proof. 
  \end{proof}
  
  \begin{figure}
		\minipage{0.45\textwidth}
			\begin{center}
				\begin{tikzpicture}[scale=.1]
                                        \foreach \i in {0,5,10,20}
                                          \draw[rounded corners = 1mm] (\i,-\i) rectangle (\i+5, -\i-5);
                                        \draw[thick, dotted] (16,-16) -- (19,-19);
                                    
                                      \end{tikzpicture}
			\end{center}
			\caption{A sum decomposable pattern must lie entirely within a single component of a skew decomposable permutation. }
			\label{fig:factorizationA}
		\endminipage\hfill
		\minipage{0.45\textwidth}
			\begin{center}
				  \begin{tikzpicture}[scale=.1]
                                        \foreach \i in {0,5,10,20}
                                          \draw[rounded corners = 1mm] (\i,\i) rectangle (\i+5, \i+5);
                                        \draw[thick, dotted] (16,16) -- (19,19);
                                    
                                      \end{tikzpicture}
			\end{center}
			\caption{A sum decomposable pattern may be spread across several components of a sum decomposable permutation.}
			\label{fig:factorizationB}
		\endminipage
	\end{figure}

  The functions $F_{m}$ in Lemma~\ref{lem:factorization} can be found easily. Considering the pattern $I(m) \oplus 1$, the popularity generating function can be computed as $P_{I(m) \oplus 1} = F_m P_1$, or equivalently, 

\[
F_m = \frac{P_{I(m+1)}}{P_1}.
\]

In particular, for the wedge permutation $w(\lambda)$ with $\lambda = \lambda_1, \lambda_2, \dots, \lambda_k$, we can apply Lemma~\ref{lem:factorization} iteratively (using the fact that complements are equipopular) to obtain

\begin{equation}
\label{eq:wedge_popularity}
P_{w(\lambda)} = \frac{P_{I(\lambda_1 +1)} P_{I(\lambda_2 +1)}
    \cdots P_{I(\lambda_k +1)}}{P_1^{k-1}}. 
\end{equation}
  
  It follows that, given the popularity generating function for an unknown pattern, if we can identify the factors of the form $F_m$, we can identify the pattern. We investigate this in the next section. Note also that this means that Proposition \ref{prop:abexchange} is not actually needed to confirm equipopularity of two different decomposition trees having the same internal degrees -- but as noted previously it does provide a mechanism for creating explicit bijections between the marked permutations of two different types within an equipopularity class.

\subsection{Identifying the Partition}

This section focuses on the popularity generating functions for the monotone patterns. Our goal is to show that, given an arbitrary product of such functions, the individual factors can always be identified. Define a bivariate generating function

\[
P(u,t) = \sum_{n\geq 0} P_{I(n)}(t)u^n.
\]

Note that $P(u,t)$ can be thought of as the generating function for marked separable permutations where the marks form an increasing sequence, counted by the number of marks ($u$) and the length of the permutation ($t$). As such,  
we can explicitly compute it using the structure of the separable permutations: 

\begin{lemma}\label{lem:P}
The function $P(u,t)$ is given by
\begin{align*}
    P = 
            \Bigg(&\left( (u+1)t^2 - 3(u+2)t + 3\right)r 
            - (3u - 17)t - 3(2u + 3)t^2 \\ &
            + (u+1)t^3 
            + \left(r(t-3) - 6t + t^2 - 3\right)s + 3\Bigg)/
            (24t - 4t^2), \\
&\text{where} \quad s = 
        \sqrt{1 + (ur-3u-6)t + (u^2 + u + 1)t^2}, \\
&\text{and} \quad r = \sqrt{1 - 6t + t^2}. 
\end{align*}

  \end{lemma}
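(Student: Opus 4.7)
The plan is to interpret $P(u,t)$ combinatorially as the generating function enumerating pairs $(\pi,M)$ where $\pi$ is a separable permutation and $M$ is a (possibly empty) subset of entries of $\pi$ forming an increasing pattern, weighted by $t^{|\pi|} u^{|M|}$; this matches the stated definition since the number of such subsets of size $n$ in $\pi$ is exactly $\nu_{I(n)}(\pi)$. With this viewpoint I would mirror the derivation of the Schr\"oder generating function from Section 2 by refining $P$ according to top-level decomposition type.

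Let $P^\oplus, P^\ominus$ denote the contributions from sum- and skew-decomposable permutations, and $P^{\oplus,\bullet}, P^{\ominus,\bullet}$ the sum- and skew-indecomposable nonempty analogues. Partitioning by size and indecomposability gives
\[
P = 1 + (1+u)t + P^\oplus + P^\ominus, \quad
P^{\oplus,\bullet} = (1+u)t + P^\ominus, \quad
P^{\ominus,\bullet} = (1+u)t + P^\oplus.
\]
The crucial combinatorial point is that in a sum-decomposable $\sigma_1 \oplus \cdots \oplus \sigma_k$ marks in distinct components combine into an increasing pattern (so they can be placed arbitrarily), while in a skew-decomposable $\sigma_1 \ominus \cdots \ominus \sigma_k$ two marks in different components form a decreasing pair (so all marks must lie in a single component). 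Splitting off the first component of each decomposition yields $P^\oplus = P^{\oplus,\bullet}(P-1)$ directly, and, via inclusion-exclusion on whether the marks sit in the leftmost skew-indecomposable piece or in the remainder,
\[
P^\ominus = P^{\ominus,\bullet}(S-1) + S^{\ominus,\bullet}(P-1) - S^{\ominus,\bullet}(S-1),
\]
where $S = P(0,t)$ is the Schr\"oder generating function and $S^{\ominus,\bullet} = (S-1)/S$ follows from the quadratic $S^2 + (t-3)S + 2 = 0$ implicit in Section 2.

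Eliminating $P^\oplus, P^\ominus, P^{\oplus,\bullet}, P^{\ominus,\bullet}$ from these five relations leaves a quadratic equation for $P$ over $\mathbb{Q}(u,t)[S]$. Repeatedly using $S^2 = (3-t)S - 2$ to simplify coefficients, I expect the middle coefficient $(1+u)tS - 3S + 2$ to collapse to $S(ut - S)$ and the equation to take the clean form
\[
(S^2 - 1)P^2 + S^2(ut - S)P + S^2 = 0.
\]
Its discriminant, after the same $S^2 = (3-t)S - 2$ reduction, should factor as exactly $S^2 s^2$ with $s$ the radical in the statement, and the quadratic formula then gives $P = S^2(S - ut - s)/(2(S^2 - 1))$, with the sign determined by the boundary condition $P(0,t) = S$. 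To convert this to the stated rational form I would multiply numerator and denominator by the Galois conjugate $\alpha := 3 - 6t + t^2 + (3-t)r$, using the identity $2(S^2 - 1)\alpha = 24t - 4t^2$ to produce the stated denominator, and the expansions of $S\alpha, S^2\alpha, S^3\alpha$ (each reducing to a low-degree polynomial in $t$ plus an $r$-multiple) to match the stated numerator term by term.

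The main obstacle is algebraic bookkeeping. The reduction to the clean quadratic, the factorization of the discriminant as $S^2 s^2$, and the final conversion to the stated rational form each hinge on repeatedly collapsing high $S$-degree expressions via $S^2 = (3-t)S - 2$. Without this cancellation the discriminant does not obviously simplify, and the final formula does not obviously emerge; verifying the calculations by computer algebra is natural, but the structural outline is entirely fixed by the combinatorial setup.
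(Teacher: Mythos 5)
Your proposal is correct and follows essentially the same route as the paper: interpret $P(u,t)$ as counting increasingly-marked separable permutations, split by sum/skew decomposability using the key observation that increasing marks distribute freely over sum components but must lie in a single skew component, and solve the resulting algebraic system (the paper likewise defers the final elimination to computer algebra). Your skew-decomposable equation is organized slightly differently (first-component split with inclusion--exclusion rather than an unmarked-on-either-side factorization), but this is a cosmetic variation of the same argument.
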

  \begin{proof}

Let $P^\oplus$ and $P^\ominus$ denote the bivariate generating functions corresponding to $P$  but restricted to the sum and skew decomposable separable permutations respectively. Since the empty permutation cannot be marked, and the permutation of length one can either carry a mark or not, and any longer separable permutation is either sum or skew decomposable, we have: 

\begin{equation}
\label{eqn:Pproof1}
      P = 1 + (u+1)t + P^\oplus + P^\ominus.
\end{equation}

A sum decomposable separable permutation can be expressed as the sum of a sum indecomposable separable and any other separable permutation (both of length at least one). Further, a marking of length $r$ in the first part and a marking of length $s$ in the second combine to form a marking of length $r+s$. Since the generating function for increasing markings within the sum indecomposable separable permutations is $P - P^\oplus - 1$, we have 

\begin{equation} 
\label{eqn:Pproof2}
P^\oplus = (P - P^\oplus - 1)(P - 1).
\end{equation}

In a skew decomposable separable permutation, the markings (if any) must lie entirely in one component (since they mark an increasing pattern).  Such a permutation must either be entirely free of marks, or consist of a marked skew indecomposable component together with a mark-free separable permutation on either side, at least one of which is nonempty. This leads to the following relationship. 

\begin{equation}\label{eqn:Pproof3}
P^\ominus = \frac{1}{2}(S - t - 1) + 
      \left(P - P^\ominus - \frac{1}{2} (S - t - 1) - 1\right) (S^2-1).
\end{equation}

where the first term accounts for the unmarked case, and the first factor in the second term accounts for non-skew decomposable separable permutations containing at least one mark.

Solving the equations~\ref{eqn:Pproof1}--\ref{eqn:Pproof3} yields the explicit generating function. 
\end{proof}
  
We can derive this bivariate generating function in a different way, which will yield a connection to the Gegenbauer polynomials, and will ultimately help to prove our main theorem. The Narayana numbers will be useful here. 

\begin{definition}
The \emph{Narayana numbers} are a two variable refinement of the Catalan numbers. The $(n,k)$th Narayana number is defined as 
\[
N_{n,k} = \frac{1}{n} \binom{n}{k} \binom{n}{k-1}.
\]
and conventionally $N_{0,0} = 1$. 
\end{definition}

The generating function for these numbers is given by 
\[ 
    N(u,t) = \sum_{n,k \geq 0} N_{n,k} t^n u^k = 
    \frac{1 - t - tu - \sqrt{(1 - t - tu)^2 - 4t^2u}}{2t}. 
\]

Now consider a sequence of polynomials $q_n$ defined as follows:
\[
q_n(x) = \sum_{k=0}^n N_{n,k} x^{k-1} (1-x)^{n-k}.
\]
Note that $q_0(x) = 1/x$. These polynomials are described in the OEIS~\cite{oeis} under sequence A174128 and can be expressed in terms of the ordinary hypergeometric function as
\begin{equation}
\label{eqn:q_hypergeometric}
    q_n(x) = \frac{(1-x)^n\ _2F_1\left(1-n, -n; 2; \frac{x}{1-x}\right)}{1-x}.
\end{equation}



\begin{lemma}
Let 
\[      
Q(u,t) = S + \sum_{n=1}^\infty 
      \frac{S^{3n-3} t^n\ q_{n-1}(S^{-2})}{(2-S^2)^{2n-1}}u^n.
\]
Then $P = Q$. Equivalently, for all $n \geq 1$,
\[
P_{I(n)}(t) =    \frac{S^{3n-3} t^n\ q_{n-1}\left(S^{-2}\right)}{(2-S^2)^{2n-1}}.
\]
\end{lemma}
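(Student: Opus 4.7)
The plan is to prove $P = Q$ by computing $Q$ in closed form and matching it with the closed form of $P$ from Lemma~\ref{lem:P}. The key idea is that the sum defining $Q$ can be recognised, after the right change of variable, as an evaluation of the Narayana bivariate generating function.

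Concretely, I would first perform the substitutions $x = S^{-2}$ and $z = S^3 tu/(2-S^2)^2$. Under these, the definition of $Q$ rewrites as
\[
Q - S = \frac{tuS^2}{2-S^2}\sum_{m \ge 0}\sum_{k=0}^m N_{m,k}(xz)^k((1-x)z)^{m-k},
\]
and the inner double sum is precisely the Narayana generating function $N(\alpha,\beta)$ at $\alpha = x/(1-x)$ and $\beta = (1-x)z$. The standard closed form of $N$, together with the identity $\beta(1+\alpha) = z$, then presents $Q - S$ as an explicit algebraic expression in $S$, $t$, $u$ involving a single radical, with radicand $(1-z)^2 - 4x(1-x)z^2$.

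The next step is to simplify that radicand using the defining relation $S^2 + (t-3)S + 2 = 0$ for the Schr\"oder generating function. Setting $r = \sqrt{1-6t+t^2}$ so that $S = (3-t-r)/2$, one obtains the useful identities $rS = 2-S^2$, $S(S+r) = 2$, and $r+t-3 = -2S$. These reduce the Narayana radicand to $s^2/r^2$, where $s$ is the square root from Lemma~\ref{lem:P} — indeed $s^2 = r^2 - 2Stu + S^2t^2u^2$ follows by expanding $s^2 = 1 + (ur - 3u - 6)t + (u^2+u+1)t^2$ and substituting $r+t-3 = -2S$. With this identification in hand, $Q - S$ collapses to an explicit algebraic expression in $r$, $s$, $S$, $t$, $u$, and a direct comparison with the Lemma~\ref{lem:P} formula for $P - S$ (after the same substitution $rS = 2 - S^2$) shows they coincide; the final check reduces to the identity $S(S+r) = 2$ again.

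The main obstacle is recognising that the Narayana radicand and the radicand $s^2$ in Lemma~\ref{lem:P} are the same object, up to the factor $r^2$ — they look quite different on the surface. The bridge between them is the algebraic relation $S(S+r) = 2$, which must be invoked at just the right moment. Once the substitutions $x = S^{-2}$ and $z = S^3 tu/(2-S^2)^2$ are in place, the proof becomes essentially a guided algebraic manipulation, though the bookkeeping is nontrivial and the choice of those substitutions is the conceptual core.
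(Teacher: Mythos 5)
Your proposal takes essentially the same route as the paper: both recognise the sum defining $Q-S$, after the substitutions $x=S^{-2}$ and $z=S^3tu/(2-S^2)^2$, as an evaluation of the bivariate Narayana generating function and then match the resulting closed form against Lemma~\ref{lem:P}, the only difference being that the paper delegates the final algebraic verification to a computer algebra system while you carry it out by hand using the relation $S^2+(t-3)S+2=0$. One small slip: the identity should read $s^2=r^2-2Stu+u^2t^2$ rather than $r^2-2Stu+S^2t^2u^2$, though your subsequent conclusion that the Narayana radicand equals $s^2/r^2$ is nevertheless correct.
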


  \begin{proof}

By the definition of the polynomials $q_n$ we have that 

\[
q_{n-1}(x) = \frac{[u^{n-1}] N\left(u(1-x), \frac{x}{1-x}\right)}{x}.
\]

This allows us to express $Q$ in terms of $N$:

\[
\begin{aligned}
    Q &= S + \sum_{n=1}^\infty \frac{s^{3n-3} t^n \ q_{n-1}(S^{-2})}
              {(2-S^2)^{2n-1}}u^n \\
      &= S + u\sum_{n=1}^\infty \frac{S^{3n-1} t^n 
          \left( [u^{n-1}] \left(N\left(u(1-S^{-2}), \frac{1}{S^2-1}
          \right)\right) \right)}{(2-S^2)^{2n-1}} u^{n-1} \\
      &= S + uN\left(\left( \frac{S^3tu}{(2-S^2)^2}\right)
              \left(1 - S^{-2}\right), \frac{1}{S^2-1}\right) \cdot
              \left(\frac{tS^2}{2-S^2}\right) . 
\end{aligned}
\]

After some prodding and coaxing, using the explicit formulas for $N$ and $S$, a computer algebra system will now verify that this expression matches with the result in Lemma~\ref{lem:P}.  Therefore $P = Q$.
  \end{proof}

  We now turn our attention to the polynomials $q_n$. We show that these polynomials are related to a family of orthogonal polynomials, which will allow us to uniquely identify the factors in an arbitrary product. 

  \begin{definition}
    The \emph{Gegenbauer polynomials} $C_n^{(\alpha)}(x)$ are a family of orthogonal polynomials on the interval $[-1,1]$ with respect to the weight function $(1-x^2)^{\alpha-1/2}$. They can be defined in terms of their generating function:

\[
\frac{1}{(1-xt+t^2)^\alpha} = \sum_{n \geq 0} C_n^{(\alpha)}(x)t^n.
\]

They can also be defined by the following recurrence:
\[
\begin{aligned}
    C_0^{(\alpha)}(x) &= 1 \\
    C_1^{(\alpha)}(x) &= 2\alpha x \\
    C_n^{(\alpha)}(x) &= \frac{1}{n} \left(2x(n+\alpha - 1) C_{n-1}^{(\alpha)}(x)
      - (n+2\alpha - 2) C_{n-2}^{(\alpha)}(x) \right).
\end{aligned}
\]
\end{definition}

\begin{lemma}
The polynomials $q_n$ satisfy
\[
q_n(x) = \frac{2(1-2x)^{n-1} C_{n-1}^{(3/2)}\left(\frac{x}{1-2x}\right)}
                    {n(n+1)}. 
\]
\end{lemma}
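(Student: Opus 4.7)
The plan is to reduce the claimed identity to a statement about ${}_2F_1$ hypergeometric series with identical parameters and then reconcile the remaining arguments. The key ingredients are the hypergeometric expression for $q_n$ already displayed in equation~\eqref{eqn:q_hypergeometric},
\[
q_n(x) = (1-x)^{n-1}\,{}_2F_1\!\left(1-n,\, -n;\, 2;\, \tfrac{x}{1-x}\right),
\]
and the standard hypergeometric form of the Gegenbauer polynomial,
\[
C_m^{(3/2)}(y) = \binom{m+2}{2}\,{}_2F_1\!\left(-m,\, m+3;\, 2;\, \tfrac{1-y}{2}\right),
\]
which I would briefly justify by checking that the right-hand side satisfies the same three-term recurrence and initial conditions as $C_m^{(3/2)}$.

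The first computational step is Pfaff's transformation $_2F_1(a,b;c;z) = (1-z)^{-a}\,{}_2F_1(a,\,c-b;\,c;\,\tfrac{z}{z-1})$ applied to $q_n$, with $(a,b,c,z) = (1-n,\,-n,\,2,\,\tfrac{x}{1-x})$. Using $1-z = \tfrac{1-2x}{1-x}$ and $\tfrac{z}{z-1} = -\tfrac{x}{1-2x}$, the factor $(1-x)^{n-1}$ combines with $(1-z)^{n-1}$ to yield
\[
q_n(x) = (1-2x)^{n-1}\,{}_2F_1\!\left(1-n,\, n+2;\, 2;\, -\tfrac{x}{1-2x}\right).
\]
On the right-hand side, substituting $y = \tfrac{x}{1-2x}$ at $m = n-1$ gives $\tfrac{1-y}{2} = \tfrac{1-3x}{2(1-2x)}$, and the Gegenbauer prefactor $\binom{n+1}{2} = \tfrac{n(n+1)}{2}$ cancels with the $\tfrac{2}{n(n+1)}$ in the lemma, leaving $(1-2x)^{n-1}\,{}_2F_1(1-n,\, n+2;\, 2;\, \tfrac{1-3x}{2(1-2x)})$.

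The main obstacle is then to reconcile the two hypergeometric arguments $-\tfrac{x}{1-2x}$ and $\tfrac{1-3x}{2(1-2x)}$, which are related by an affine shift but are not equal as functions. I would look for a quadratic transformation of $_2F_1$ specialized to the parameter triple $(1-n,\,n+2;\,2)$ that carries one argument to the other. Failing a clean closed-form identity, my backup is a recurrence argument: both sides are polynomials in $x$, so it suffices to verify they satisfy a common recurrence in $n$ with matching base cases. The Gegenbauer recurrence $n\,C_n^{(3/2)}(y) = (2n+1)\,y\,C_{n-1}^{(3/2)}(y) - (n+1)\,C_{n-2}^{(3/2)}(y)$, after absorbing the prefactor $\tfrac{2(1-2x)^{n-1}}{n(n+1)}$, yields a three-term recurrence in $n$ for the right-hand side; this should be compared against the recurrence for $q_n$ extracted from the quadratic generating function $\sum_{n\geq 0} q_n(x)\,t^n$ (obtained from the Narayana generating function by the substitutions $u\mapsto \tfrac{x}{1-x}$, $t\mapsto (1-x)t$), and checked against the base cases $q_0(x) = 1/x$ and $q_1(x) = 1$.
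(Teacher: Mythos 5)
Your reduction is correct as far as it goes, and it is in fact the paper's intended argument carried out by hand (the paper's own proof just cites ``known representations \dots in terms of hypergeometric functions'' and a CAS check). Pfaff's transformation applied to Equation~\ref{eqn:q_hypergeometric} does give
\[
q_n(x) = (1-2x)^{n-1}\,{}_2F_1\!\left(1-n,\,n+2;\,2;\,-\tfrac{x}{1-2x}\right),
\]
and your hypergeometric form of $C_m^{(3/2)}$ with prefactor $\binom{m+2}{2}=\tfrac{n(n+1)}{2}$ at $m=n-1$ is also right. But the ``main obstacle'' you then run into is not surmountable: the two arguments $-\tfrac{x}{1-2x}$ and $\tfrac{1-3x}{2(1-2x)}$ cannot be reconciled because the identity as printed is false. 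Check $n=2$: the left side is $q_2(x)=N_{2,1}(1-x)+N_{2,2}\,x=1$, while the right side is $\tfrac{2(1-2x)}{6}\,C_1^{(3/2)}\!\left(\tfrac{x}{1-2x}\right)=\tfrac{2(1-2x)}{6}\cdot\tfrac{3x}{1-2x}=x$. Consequently there is no quadratic transformation to be found, and your backup plan (matching three-term recurrences plus base cases) necessarily fails at the base cases. Hunting for a transformation before testing a small case cost you here.

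The resolution is that the lemma's Gegenbauer argument should be $\tfrac{1}{1-2x}$ rather than $\tfrac{x}{1-2x}$ (with the standard normalization given by the paper's recurrence). Setting $y=\tfrac{1}{1-2x}$ gives $\tfrac{1-y}{2}=-\tfrac{x}{1-2x}$, which is exactly the argument your Pfaff computation produced; the two ${}_2F_1$ expressions then agree with identical parameters and identical arguments, and your first two steps already constitute a complete and rather cleaner proof than the paper's CAS appeal --- no recurrence argument is needed. So the work is essentially right and, properly finished, it both exposes the typo in the statement and supplies the corrected proof; as written, however, the proposal neither proves the stated identity (nothing can) nor identifies why it cannot be proved.
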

  \begin{proof}
    This follows from known representations of the Gegenbauer polynomials in terms of hypergeometric functions and Equation \ref{eqn:q_hypergeometric}, and can be verified by a computer algebra system (Maple, in this case). 
  \end{proof}

Since the Gegenbauer polynomials are orthogonal, it follows that given an arbitrary product of polynomials from the set $\{q_n(x)\}_{n\geq 0}$, we can always recover the factors. Combined with Lemma~\ref{lem:factorization}, Equation \ref{eq:wedge_popularity}, and suitable rescalings and changes of variable, this implies that given any popularity generating function of a wedge pattern, we can always recover the lengths of the monotone segments that make up the wedge. Therefore, two wedge patterns have the same popularity generating function if and only if they correspond to the same partition. We summarize in the following theorem.

\begin{theorem}\label{thm:main-result}
Two separable patterns are equipopular in the separable permutations if and only if they have the same signature. Therefore, the set of partitions of the integer $n-1$ is in bijection with the equipopularity classes for separable patterns of length $n$ in the separable permutations. 
\end{theorem}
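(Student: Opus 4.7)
The forward implication was established in Section~\ref{sec:equi}: by the theorem there, every separable permutation of length $n$ is equipopular to $\omega(\lambda)$, where $\lambda$ is the signature of its decomposition tree. It therefore suffices to show that distinct partitions $\lambda \neq \mu$ of $n-1$ produce wedge permutations with distinct popularity generating functions, and for this I would combine the analytic tools developed in this section.

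The first step would be to make the formula for $P_{\omega(\lambda)}$ fully explicit. Substituting $P_{I(n)}(t) = S^{3n-3} t^n q_{n-1}(S^{-2}) / (2-S^2)^{2n-1}$ and $P_{I(1)}(t) = tS^2/(2-S^2)$ into Equation~\ref{eq:wedge_popularity}, a short computation yields, for $\lambda \vdash n-1$ with $k$ parts,
\[
P_{\omega(\lambda)}(t) \;=\; \frac{S^{\,3n-2k-1}\, t^{\,n}}{(2-S^{2})^{2n-1}} \,\prod_{i=1}^{k} q_{\lambda_i}\!\bigl(S^{-2}\bigr).
\]
Assuming $P_{\omega(\lambda)} = P_{\omega(\mu)}$ for a second partition $\mu \vdash n-1$ with $j$ parts, cancelling common factors and setting $x = S(t)^{-2}$ gives the identity $x^{k}\prod_{i}q_{\lambda_i}(x) = x^{j}\prod_{i}q_{\mu_i}(x)$, valid as a functional identity in $t$. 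Since $x = S(t)^{-2}$ is a nonconstant analytic function of $t$ near $0$, this must in fact be a polynomial identity in $x$; both sides are polynomials of degree $n-1$, since every part of a signature is at least $1$ and $\deg q_m = m-1$ for $m \geq 1$.

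To recover $\lambda$ from $x^k \prod_{i} q_{\lambda_i}(x)$, I would apply the Gegenbauer relation $q_n(x) = \frac{2(1-2x)^{n-1}}{n(n+1)} C_{n-1}^{(3/2)}\!\bigl(\frac{x}{1-2x}\bigr)$ and the M\"obius substitution $y = x/(1-2x)$, producing a polynomial identity in $y$ of the form $c_\lambda \cdot y^{\bullet}\prod_i C_{\lambda_i-1}^{(3/2)}(y) = c_\mu \cdot y^{\bullet}\prod_i C_{\mu_i-1}^{(3/2)}(y)$ for explicit constants. Then I would invoke the classical fact that $\gcd\bigl(C_m^{(3/2)}, C_n^{(3/2)}\bigr) = 1$ for $m \neq n$: a common zero of two members of an OPS would propagate through the three-term recurrence and force $C_0^{(3/2)} \equiv 1$ to vanish there. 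Combined with unique factorization in $\mathbb{Q}[y]$, this forces the multisets $\{\lambda_i\}$ and $\{\mu_i\}$ to coincide, hence $\lambda = \mu$.

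The main obstacle I anticipate is the first step — carefully tracking the powers of $S$, $(2-S^2)$, $t$, and the contribution of $P_1^{k-1}$ so that, after substituting $x = S^{-2}$, the residual identity is genuinely a polynomial identity in a single variable, and confirming that the M\"obius substitution $y = x/(1-2x)$ (invertible away from the isolated point $x=\tfrac12$) transports the coprimality argument cleanly. Once this algebraic setup is in place, the orthogonality/coprimality argument is the conceptual core and the rest is bookkeeping.
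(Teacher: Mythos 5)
Your overall architecture is exactly the paper's: the forward direction is the theorem of Section~\ref{sec:equi}, and the converse is reduced via Equation~\ref{eq:wedge_popularity} and the explicit formula for $P_{I(n)}$ to showing that the multiset $\{\lambda_i\}$ can be recovered from the product $\prod_i q_{\lambda_i}(x)$. Your computation of $P_{\omega(\lambda)}$ is correct (the exponents $3n-2k-1$ and $2n-1$ check out), and the passage from a functional identity in $t$ to a polynomial identity in $x=S(t)^{-2}$ is fine since $x$ is a nonconstant analytic function of $t$ near the origin. The paper itself only sketches this last step, asserting that orthogonality lets one ``recover the factors.''

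However, the specific justification you give for that recovery step is wrong. Gegenbauer polynomials of distinct index are \emph{not} pairwise coprime: $C_m^{(3/2)}(-y)=(-1)^m C_m^{(3/2)}(y)$, so every odd-index member vanishes at $y=0$; in particular $C_1^{(3/2)}$ and $C_3^{(3/2)}$ share the root $0$. Your proposed proof of coprimality also does not work: the three-term recurrence only propagates a common zero of \emph{consecutive} members $C_{n}^{(\alpha)}, C_{n-1}^{(\alpha)}$ down to $C_0^{(\alpha)}$; a common zero of $C_m^{(\alpha)}$ and $C_n^{(\alpha)}$ with $|m-n|\geq 2$ propagates nowhere. (Compare Chebyshev: $T_2$ and $T_6$ share the root $\cos(\pi/4)$.) Consequently the appeal to unique factorization in $\mathbb{Q}[y]$ does not close the argument. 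The orthogonality fact you actually need is interlacing: for an orthogonal polynomial sequence with respect to a positive weight, all zeros are real and simple and the \emph{largest} zero of $C_m^{(3/2)}$ is strictly increasing in $m$. Hence in $\prod_i C_{\lambda_i-1}^{(3/2)}(y)$ the largest root is the largest root of $C_{\max\lambda_i-1}^{(3/2)}$, and its multiplicity equals the number of maximal parts; one can then divide out those factors and induct, recovering all parts $\lambda_i\geq 2$ (parts equal to $1$ contribute the constant $C_0^{(3/2)}=1$ and are determined afterwards by the number of parts $k$, which is visible in the exponent of $x$, together with $\sum_i\lambda_i=n-1$). With that substitution the rest of your write-up goes through.
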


\section{Summary and Discussion}

As we have seen, the equipopularity classes among separable permutations of length $n$ are in one-to=one correspondence with partitions of $n-1$. This result exactly parallels the corresponding results for the class $\operatorname{Av}(132)$ of 132-avoiding permutations obtained in \cite{chua:popularity, rudolph:popularity}. Although $\operatorname{Av}(132)$ is a subclass of the separable permutations, neither result immediately implies the other (since the popularities are computed with respect to different universes). However, our techniques can be adapted to $\operatorname{Av}(132)$ to obtain new, and arguably more uniform, proofs of those already known results.

There is a wider collection of classes whose permutations correspond to more general decomposition trees -- these are the \emph{substitution closed classes}. Aside from trivial instances, the separable permutations are the smallest of such classes. In the particular case where a substitution closed class has only finitely many simple permutations (see \cite{albert:simple-permutat:} for background material) it seems likely that analogous arguments to those used here could be applied to define invariants which would give upper bounds on the number of equipopularity classes. However, it seems much less likely that the technical and algebraic arguments used here in order to obtain matching lower bounds (i.e., in determining that all classes with differing invariants have differing popularity) would generalize to that broader context.

Concerning those arguments, it is natural to wonder if there is a combinatorial explanation of the appearance of the Narayana numbers and the Gegenbauer polynomials in the popularity generating functions for monotone patterns. Of course the Narayana numbers are close relatives of the Catalan numbers, which in turn belong to the same family as the Schr\"{o}der numbers so this may be another instance where common sequences arise just because we are looking at simple problems (see \cite{zeilberger:opinion49}).

\end{document}